\newtheorem{theorem}{Theorem}
\newtheorem{corollary}[theorem]{Corollary}
\newtheorem{example}{Example}
\newtheorem{lemma}[theorem]{Lemma}
\newproof{proof}{Proof}
\numberwithin{equation}{section}
\numberwithin{theorem}{section}
\begin{document}

\begin{frontmatter}

\title{Oriented Hypergraphic Matrix-tree Type Theorems and Bidirected Minors via Boolean Order Ideals.}

\author[]{Ellen Robinson}
\author[]{Lucas J. Rusnak\corref{mycorrespondingauthor}}
\ead{Lucas.Rusnak@txstate.edu}
\author[]{Martin Schmidt}
\author[]{Piyush Shroff}

\address{Department of Mathematics\\
Texas State University\\
San Marcos, TX 78666, USA}

\cortext[mycorrespondingauthor]{Corresponding author}

\begin{abstract}

Restrictions of incidence-preserving path maps produce an oriented hypergraphic All Minors Matrix-tree Theorems for Laplacian and adjacency matrices. The images of these maps produce a locally signed graphic, incidence generalization, of cycle covers and basic figures that correspond to incidence-$k$-forests. When restricted to bidirected graphs the natural partial ordering of maps results in disjoint signed boolean lattices whose minor calculations correspond to principal order ideals. As an application, (1) the determinant formula of a signed graphic Laplacian is reclaimed and shown to be determined by the maximal positive-circle-free elements, and (2) spanning trees are equivalent to single-element order ideals.

\end{abstract}

\begin{keyword}
Matrix-tree theorem \sep Laplacian matrix \sep signed graph \sep bidirected graph \sep oriented hypergraph
\MSC[2010] 05C50 \sep 05C65 \sep 05C22
\end{keyword}

\end{frontmatter}



\section{Introduction}

An oriented hypergraph is a signed incidence structure that first appeared in \cite{Shi1} to 
study applications to VLSI via minimization and logic synthesis, and generalize Kirchhoff's laws \cite{ShiBrz, Shi2}. 
This incidence based approach allows for Laplacian, spectral, and balanced, graph-theoretic theorems to be extended to hypergraphs using their locally signed graphic structure \cite{OHHar, Reff1, AH1}.  Moreover, the concepts of a balanced hypergraph \cite{Berge2, CO1} and a balanced $\{0,\pm1\}$-matrix \cite{DBM, BM, FHO} can simultaneously be studied through their oriented hypergraphic structure \cite{OH1}.
 
Sachs' Theorem \cite{SGBook} characterizes the coefficents of the characteristic polynomial of the adjacency matrix by generalizing the concept of a cycle cover of a graph; and has long known application in the study of molecular orbitals \cite{SachsChem}. Sachs' Theorem was recently generalized to signed graphs in \cite{Sim1}, and to oriented hypergraphs in \cite{OHSachs}. We show that the incidence generalization of cycle covers (called \emph{contributors}) obtained in \cite{OHSachs} allows for the hypergraphic generalization of the All Minors Matrix-tree Theorem of Chaiken in \cite{Seth1, Seth2}. Moreover, when restricted to bidirected graphs, the contributors align with Chaiken's $k$-forests.

Section \ref{MTT} collects the necessary oriented hypergraphic background to present the All Minors Matrix-tree theorem as a consequence of the oriented hypergraphic Sachs' Theorem in \cite{OHSachs} --- the coefficients of the characteristic polynomial are the diagonal minors, are determined by contributor sums, and a similar proof can be used to obtain any minor. 

Contributor maps are specialized to Laplacians of bidirected graphs in Section \ref{CMap}. A
natural partial ordering of contributors is introduced where each associated
equivalence classes (called \emph{activation classes}) is boolean; this is
done by introducing \emph{incidence packing} and \emph{unpacking}
operations. If an oriented hypergraph contains edges of size larger than $3$%
, then unpacking is not well-defined, and the resulting equivalence classes
need not be lattices. Activation classes are further refined via iterated
principal order ideals in order to examine minors of the Laplacian.

Section \ref{MTT2} examines the contributors in the adjacency completion of
a bidirected graph to obtain a restatement of the All Minors Matrix-tree
Theorem in terms of sub-contributors (as opposed to restricted
contributors). This implies there is a universal collection of contributors
(up to resigning) which determines the minors of all bidirected graphs that
have the same injective envelope --- see \cite{Grill1} for more on the
injective envelope. These sub-contributors determine permanents/determinants
of the minors of the original bidirected graph and are activation equivalent
to the forest-like objects in \cite{Seth1}. Additionally, the standard
determinant of the signed graphic Laplacian is presented as a sum of maximal
contributors, while the first minors of the Laplacian contain a subset of
contributors that are activation equivalent to spanning trees.

While the techniques introduced for bidirected graphs do not readily extend
to all oriented hypergraphs, they bear a remarkable similarity to Tutte's
development of transpedances in \cite{Tutte} --- indicating the possibility
of a locally signed graphic interpretation of transpedance theory. Since
contributor sets produce the finest possible set of objects signed $\{0,\pm
1\}$ whose sum produces the permanent/determinant it is natural to ask what
classes of graphs achieve specific permanent/determinant values. Additionally, recent work on vertex-weighted Laplacians \cite{ChungF1}, graph dynamics \cite{SpecDyn}, and oriented spanning trees and sandpile groups \cite{LL}, seem to have natural oriented hypergraphic analogs.

\section{Preliminaries and the Matrix-tree Theorem}

\label{MTT}

\subsection{Oriented Hypergraph Basics}

A condensed collection of definitions are provided in this subsection to
improve readability, for a detailed introduction to the definitions the
reader is referred to \cite{OHSachs}. An \emph{oriented hypergraph} is a
quintuple $(V,E,I,\iota ,\sigma )$ where $V$, $E$, and $I$ are disjoint sets
of \emph{vertices}, \emph{edges}, and \emph{incidences}, with \emph{%
incidence function} $\iota :I\rightarrow V\times E$, and \emph{incidence
orientation function} $\sigma :I\rightarrow \{+1,-1\}$. A \emph{bidirected
graph} is an oriented hypergraph with the property that for each $e\in E$, $%
\left\vert \{i\in I\mid (proj_{E}\circ \iota )(i)=e\}\right\vert =2$, and
can be regarded as an orientation of a signed graph (see \cite{SG, OSG})
where the \emph{sign of an edge }$e$ is%
\begin{equation*}
sgn(e)=-\sigma (i)\sigma (j)\text{,}
\end{equation*}%
where $i$ and $j$ are the incidences containing $e$.

A \emph{backstep of }$G$ is an embedding of $\overrightarrow{P}_{1}$ into $G$
that is neither incidence-monic nor vertex-monic; a \emph{loop of }$G$ is an
embedding of $\overrightarrow{P}_{1}$ into $G$ that is incidence-monic but
not vertex-monic; a \emph{directed adjacency of }$G$ is an embedding of $%
\overrightarrow{P}_{1}$ into $G$ that is incidence-monic. A \emph{directed
weak walk of length }$k$\emph{\ in }$G$ is the image of an incidence
preserving embedding of a directed path of length $k$ into $G$.
Conventionally, a backstep is a sequence of the form $(v,i,e,i,v)$, a loop
is a sequence of the form $(v,i,e,j,v)$, and an adjacency is a sequence of
the form $(v,i,e,j,w)$ where $\iota (i)=(v,e)$ and $\iota (j)=(w,e)$. The 
\emph{opposite} embedding is image of the reversal of the initial directed
path, while the non-directed version is the set on the sequence's image.

The \emph{sign of a weak walk} is defined as 
\begin{equation*}
sgn(W)=(-1)^{k}\prod_{h=1}^{2k}\sigma (i_{h})\text{,}
\end{equation*}%
which implies that for a path in $G$ the product of the adjacency signs of
the path is equal to the sign of the path calculated as a weak walk.

\subsection{The Matrix-tree Theorem}

It was shown in \cite{AH1} that the $(v,w)$-entry of the oriented incidence
Laplacian are the negative weak walks of length $1$ from $v$ to $w$ minus
the number of positive weak walks of length $1$ from $v$ to $w$. This was
restated in \cite{OHSachs} as the follows:

\begin{theorem}
\label{WWT}The $(v,w)$-entry of $\mathbf{L}_{G}$ is
 $\dsum\limits_{\omega \in \Omega _{1}}-sgn(\omega (\overrightarrow{P}_{1}))$, 
where $\Omega _{1}$ is the set of all
incidence preserving maps $\omega :\overrightarrow{P}_{1}\rightarrow G$ with 
$\omega (t)=v$ and $\omega (h)=w$.
\end{theorem}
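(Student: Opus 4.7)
The plan is to reduce the statement to the known description of the oriented incidence Laplacian from \cite{AH1} by identifying the summation set $\Omega_1$ with the set of length-$1$ weak walks from $v$ to $w$ and then performing the sign computation directly from the definition.

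First, I would set up a bijection between $\Omega_1$ and the collection of directed weak walks of length $1$ in $G$ starting at $v$ and ending at $w$. By definition, a directed weak walk of length $1$ is the image of an incidence-preserving map $\omega:\overrightarrow{P}_1\to G$, and the endpoint conditions $\omega(t)=v$, $\omega(h)=w$ are exactly the conditions $(v,i,e,j,w)$ with $\iota(i)=(v,e)$ and $\iota(j)=(w,e)$. Thus $\Omega_1$ is precisely the (set-indexed) collection of length-$1$ weak walks from $v$ to $w$; this includes backsteps (when $v=w$ and $i=j$), loops (when $v=w$ but $i\neq j$), and directed adjacencies (when $i\neq j$ and the projected vertices differ).

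Second, I would carry out the sign computation. Specializing the weak-walk sign formula to $k=1$ gives
\begin{equation*}
sgn(\omega(\overrightarrow{P}_1)) = (-1)^1\,\sigma(i)\sigma(j) = -\sigma(i)\sigma(j),
\end{equation*}
so that $-sgn(\omega(\overrightarrow{P}_1))=\sigma(i)\sigma(j)$. A weak walk is positive (resp.\ negative) according to whether $\sigma(i)\sigma(j)=+1$ (resp.\ $-1$), so $-sgn(\omega(\overrightarrow{P}_1))$ contributes $-1$ for each positive walk and $+1$ for each negative walk.

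Third, I would combine these two observations with the description recalled from \cite{AH1}: the $(v,w)$-entry of $\mathbf{L}_G$ equals the number of negative length-$1$ weak walks from $v$ to $w$ minus the number of positive ones. Under the bijection above this number is exactly $\sum_{\omega\in\Omega_1}-sgn(\omega(\overrightarrow{P}_1))$, which yields the theorem. The only real point to be careful about is bookkeeping, namely verifying that the three incidence-preservation types (backstep, loop, adjacency) are all captured by $\Omega_1$ without over- or undercounting, so that the diagonal and off-diagonal entries of $\mathbf{L}_G$ are handled uniformly; this is the one step that deserves explicit verification, but it follows immediately from the definitions of the incidence function $\iota$ and the conventional sequence representations given in the preliminaries.
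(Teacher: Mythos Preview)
The paper does not supply its own proof of this theorem; it is presented explicitly as a restatement (attributed to \cite{OHSachs}) of the fact from \cite{AH1} that the $(v,w)$-entry of $\mathbf{L}_G$ counts negative minus positive length-$1$ weak walks from $v$ to $w$. Your argument---identifying $\Omega_1$ with the length-$1$ weak walks from $v$ to $w$ and then computing the sign---is precisely the unpacking this ``restatement'' amounts to, so your approach is correct and is essentially what the paper is invoking.

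One minor sign slip in the middle of your second paragraph: a length-$1$ weak walk is \emph{positive} when $sgn(W)=+1$, i.e., when $-\sigma(i)\sigma(j)=+1$, hence $\sigma(i)\sigma(j)=-1$ (not $+1$ as you wrote). Your final sentence in that paragraph---that $-sgn(\omega(\overrightarrow{P}_1))$ contributes $-1$ for each positive walk and $+1$ for each negative walk---is nonetheless correct, since it follows directly from $-sgn=-(+1)=-1$ for a positive walk; the erroneous intermediate characterization is not actually used. Just correct or delete that clause and the proof is clean.
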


A \emph{contributor of }$\emph{G}$ is an incidence preserving function $%
c:\dcoprod\limits_{v\in V}\overrightarrow{P}_{1}\rightarrow G$ with $%
p(t_{v})=v$ and $\{p(h_{v})\mid v\in V\}=V$. A \emph{strong-contributor of }$%
G$ is an incidence-monic contributor of $G$ -- i.e. the backstep-free
contributors of $G$. Let $\mathfrak{C}(G)$ (resp. $\mathfrak{S}(G)$) denote
the sets of contributors (resp. strong contributors) of $G$. In \cite%
{OHSachs} contributors provided a finest count to determine the
permanent/determinant of Laplacian and adjacency matrices and their
characteristic polynomials of any integral matrix as the incidence matrix of
the associated oriented hypergraph. The values $ec(c)$, $oc(c)$, $pc(c)$,
and $nc(c)$ denote the number of even, odd, positive, and negative circles
in the image of contributor $c$. Additionally, the sets $\mathfrak{C}_{\geq
0}(G)$ (resp. $\mathfrak{C}_{=0}(G)$) denote the set of all contributors
with at least $0$ (resp. exactly $0$) backsteps.

\begin{theorem}[\protect\cite{OHSachs}]
\label{PDLA}Let $G$ be an oriented hypergraph with adjacency matrix $\mathbf{%
A}_{G}$ and Laplacian matrix $\mathbf{L}_{G}$, then

\begin{enumerate}
\item $\mathrm{perm}(\mathbf{L}_{G})=\dsum\limits_{c\in \mathfrak{C}_{\geq
0}(G)}(-1)^{oc(c)+nc(c)}$,

\item $\det (\mathbf{L}_{G})=\dsum\limits_{c\in \mathfrak{C}_{\geq
0}(G)}(-1)^{pc(c)}$,

\item $\mathrm{perm}(\mathbf{A}_{G})=\dsum\limits_{c\in \mathfrak{C}%
_{=0}(G)}(-1)^{nc(c)}$,

\item $\det (\mathbf{A}_{G})=\dsum\limits_{c\in \mathfrak{C}%
_{=0}(G)}(-1)^{ec(c)+nc(c)}$.
\end{enumerate}
\end{theorem}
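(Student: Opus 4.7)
The plan is to derive all four identities from the permutation-sum expansions of the permanent and determinant, using Theorem~\ref{WWT} to replace each Laplacian entry by a sum over length-one incidence-preserving maps. First I would write $\mathrm{perm}(\mathbf{L}_G)=\sum_{\pi\in S_V}\prod_v (\mathbf{L}_G)_{v,\pi(v)}$ and expand each factor as $-\sum_{\omega}\mathrm{sgn}(\omega(\overrightarrow{P}_1))$. Distributing the product re-indexes the resulting sum by coherent choices of one $\overrightarrow{P}_1$-map per vertex, which by the universal property of the coproduct assemble into a single incidence-preserving map $c:\coprod_v\overrightarrow{P}_1\to G$, i.e., a contributor, with associated permutation $\pi_c(v)=c(h_v)$; the surjectivity condition $\{c(h_v)\}=V$ from the definition of contributor is exactly the requirement that $\pi_c\in S_V$. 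Because the diagonal Laplacian entries include backsteps, the contributors range over all of $\mathfrak{C}_{\geq 0}(G)$; the adjacency variants (items 3 and 4) behave identically except their entries are supported only on incidence-monic maps, restricting the sum to $\mathfrak{C}_{=0}(G)$.

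Next I would compute the signed weight attached to each contributor. Using $\mathrm{sgn}(\omega_v)=(-1)\sigma(c(i_{t,v}))\sigma(c(i_{h,v}))$ and the multiplicativity of the $\sigma$-product over the connected components of the image of $c$, I would evaluate each component type: backsteps contribute $\sigma(i)^2=1$; loops contribute a single $\sigma(i)\sigma(j)$; and any circle $C$ of length $k$ collapses, via the weak-walk sign formula, to a product equal to $(-1)^k\mathrm{sgn}(C)$. Combined with the extra $-1$ coming from each Laplacian entry, a short case check on the parities of $k$ and $\mathrm{sgn}(C)$ shows the per-circle factor for $\mathrm{perm}(\mathbf{L}_G)$ equals $(-1)^{[k\text{ odd}]+[\mathrm{sgn}(C)=-1]}$, aggregating over all circles (loops included) to $(-1)^{oc(c)+nc(c)}$ and proving (1). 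For (2), multiplying by the permutation sign $\mathrm{sgn}(\pi_c)=\prod_C(-1)^{|C|-1}$ cancels the parity of $|C|$ and leaves one factor of $-\mathrm{sgn}(C)$ per circle, aggregating to $(-1)^{pc(c)}$.

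Items (3) and (4) follow from the same framework: the adjacency entry convention omits the leading $-$ of the Laplacian, shifting each per-circle factor by $(-1)^{|C|}$, and backstep-free contributors carry only circle and loop components. A parallel parity bookkeeping, together with the complementary identities that $oc(c)+ec(c)$ and $pc(c)+nc(c)$ both count the total number of circles in the image of $c$, yields $(-1)^{nc(c)}$ for the permanent and $(-1)^{ec(c)+nc(c)}$ for the determinant. The main obstacle I anticipate is the careful bookkeeping of three independent sign sources — the $(-1)^k$ in the weak-walk definition, the product of incidence signs $\sigma$ around each circle, and the permutation sign $\mathrm{sgn}(\pi_c)$ — and ensuring they recombine cleanly into the parities $oc, ec, pc, nc$ uniformly across all four identities. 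A secondary subtlety is pinning down the oriented hypergraphic adjacency-entry sign convention so that it is consistent with the Laplacian convention of Theorem~\ref{WWT} without introducing hidden offsets.
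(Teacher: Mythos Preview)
The paper does not prove Theorem~\ref{PDLA}; it is quoted from \cite{OHSachs}, and the only hint given here is the remark following Theorem~\ref{AMMT} that the argument proceeds ``using the bijective definitions of permanent/determinant.'' Your proposal does exactly this: expand $\mathrm{perm}$ and $\det$ as sums over $S_V$, replace each entry via Theorem~\ref{WWT}, identify the resulting tuples with contributors, and then do the per-circle sign accounting. The sign calculations you sketch are correct (for a circle $C$ of length $k$ one gets $\prod_{v\in C}\mathrm{sgn}(\omega_v)=\mathrm{sgn}(C)$, so the Laplacian's extra $-1$ per entry contributes $(-1)^{k}$ and the permutation sign contributes $(-1)^{k-1}$, and the four parity formulas drop out exactly as you describe). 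So your approach is sound and is, as far as can be inferred from the citation and the surrounding text, the same as the intended proof.

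One small caution on your ``secondary subtlety'': you assume the adjacency entry omits the leading minus sign of the Laplacian entry. That is indeed the convention used in \cite{OHSachs,AH1} (so that $\mathbf{L}_G=\mathbf{D}_G-\mathbf{A}_G$ in the degree/adjacency decomposition), and with it your parity bookkeeping for items (3) and (4) goes through cleanly; just be sure to state this convention explicitly rather than leaving it implicit.
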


For a $V\times V$ matrix $\mathbf{M}$, let $U,W\subseteq V$, define $[%
\mathbf{M}]_{(U;W\mathbf{)}}$ be the minor obtained by striking out rows $U$
and columns $W$ from $\mathbf{M}$. Let $\mathfrak{C}(U;W;G)$ be the set of
all sub-contributors of $G$ with $c:\dcoprod\limits_{u\in \overline{U}}%
\overrightarrow{P}_{1}\rightarrow G$ with $p(t_{u})=u$ and $\{p(h_{u})\mid
u\in \overline{U}\}=\overline{W}$. Define $\mathfrak{S}(U;W;G)$ analogously
for strong-contributors. Let the values $en(c)$, $on(c)$, $pn(c)$, and $%
nn(c) $ denote the number of even, odd, positive, and negative,
non-adjacency-trivial components (paths or circles) in the image of $c$.

\begin{theorem}
\label{AMMT}Let $G$ be an oriented hypergraph with adjacency matrix $\mathbf{%
A}_{G}$ and Laplacian matrix $\mathbf{L}_{G}$, then

\begin{enumerate}
\item $\mathrm{perm}([\mathbf{L}_{G}]_{(U;W\mathbf{)}})=\dsum\limits_{c\in 
\mathfrak{C}(U;W;G)}(-1)^{on(c)+nn(c)}$,

\item $\det ([\mathbf{L}_{G}]_{(U;W\mathbf{)}})=\dsum\limits_{c\in \mathfrak{%
C}(U;W;G)}\varepsilon (c)\cdot (-1)^{on(c)+nn(c)}$,

\item $\mathrm{perm}([\mathbf{A}_{G}]_{(U;W\mathbf{)}})=\dsum\limits_{c\in 
\mathfrak{S}(U;W;G)}(-1)^{nn(c)}$,

\item $\det ([\mathbf{A}_{G}]_{(U;W\mathbf{)}})=\dsum\limits_{c\in \mathfrak{%
S}(U;W;G)}\varepsilon (c)\cdot (-1)^{en(c)+nn(c)}$.
\end{enumerate}

Where $\varepsilon (c)$ is the number of inversions in the natural bijection
from $\overline{U}$ to $\overline{W}$.
\end{theorem}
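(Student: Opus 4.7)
The plan is to mirror the derivation of Theorem \ref{PDLA} from \cite{OHSachs} while tracking the restriction to $\overline{U}$ and $\overline{W}$. The strategy has three stages: expand the minor via the Leibniz/permanent formula, substitute each entry using Theorem \ref{WWT}, and reorganize the resulting signed sum as a sum over sub-contributors. First, write $\mathrm{perm}([\mathbf{M}]_{(U;W)})$ (resp.\ $\det$) as $\sum_{\pi}\prod_{u\in\overline{U}}M_{u,\pi(u)}$ (resp.\ weighted by $\varepsilon(\pi)$), where $\pi$ ranges over bijections $\overline{U}\to\overline{W}$.

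For $\mathbf{M}=\mathbf{L}_{G}$, Theorem \ref{WWT} expands each entry as a signed sum over incidence-preserving maps $\omega_{u}:\overrightarrow{P}_{1}\to G$ with $\omega_{u}(t)=u$ and $\omega_{u}(h)=\pi(u)$; for $\mathbf{M}=\mathbf{A}_{G}$, only the incidence-monic such maps survive. Distributing the product over these sums produces a sum indexed by tuples $(\omega_{u})_{u\in\overline{U}}$, and each tuple assembles canonically into a sub-contributor $c:\dcoprod_{u\in\overline{U}}\overrightarrow{P}_{1}\to G$ with $c(t_{u})=u$ and $\{c(h_{u})\}=\overline{W}$. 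In the adjacency case every $\omega_{u}$ is forced incidence-monic, so $c$ lies in $\mathfrak{S}(U;W;G)$. Thus the double sum collapses to a single sum over $\mathfrak{C}(U;W;G)$ or $\mathfrak{S}(U;W;G)$, giving the bijective content of the theorem.

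The main obstacle is verifying that the sign of each term repackages cleanly. I would prove this component by component on the image of $c$, arguing that the sign $-\sigma(i)\sigma(j)$ from each $\omega_{u}$ composes along a non-adjacency-trivial weak-walk component of length $k$ into $(-1)^{k}\prod\sigma(i_{h})$, recovering the sign of the component as a weak walk. Along each circle the $t$- and $h$-roles of the vertices close up, so the component's parity and sign-class determine a factor of $(-1)$ precisely when it is an odd/negative circle (Laplacian) or an even/negative circle (adjacency determinant); along each path the endpoints are not matched, and a careful parity count shows the analogous contribution for odd/negative (resp.\ even/negative) paths. Adjacency-trivial components (isolated fixed points or backsteps) contribute trivial factors. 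Summing these exponents yields $on(c)+nn(c)$ in the Laplacian cases and $en(c)+nn(c)$ or $nn(c)$ in the adjacency cases, exactly as stated.

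Finally, the determinant formulas (2) and (4) absorb the permutation sign $\varepsilon(\pi)$; since the bijection $u\mapsto c(h_{u})$ is precisely the natural bijection $\overline{U}\to\overline{W}$ whose inversion count defines $\varepsilon(c)$, this factor is intrinsic to $c$ and appears exactly in those two parts. The only genuinely new ingredient beyond the proof of Theorem \ref{PDLA} is the parity bookkeeping for path components — circles alone suffice in the full-contributor setting, whereas here every $c\in\mathfrak{C}(U;W;G)\setminus\mathfrak{C}(G)$ contains a path — and I expect this to proceed in the Chaiken-style pattern of \cite{Seth1, Seth2}, using the component decomposition already developed in \cite{OHSachs} but iterated over non-adjacency-trivial components rather than over circles alone.
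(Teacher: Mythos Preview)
Your proposal is correct and is precisely the argument the paper has in mind: the paper's own proof is the single sentence ``analogous to Theorem~4.1.1 in \cite{OHSachs} using the bijective definitions of permanent/determinant,'' and what you have written is a faithful unpacking of that sentence---expand the minor bijectively, plug in Theorem~\ref{WWT} entrywise, reassemble the tuples into elements of $\mathfrak{C}(U;W;G)$ or $\mathfrak{S}(U;W;G)$, and do the component-by-component sign count, the only novelty over the full-contributor case being the path components and the inversion factor $\varepsilon(c)$.
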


The proof of Theorem \ref{AMMT} is analogous to Theorem 4.1.1 in \cite%
{OHSachs} using the bijective definitions of permanent/determinant.

The value of $\varepsilon (c)$ can be modified to count circle and paths
separately, as the circle components simplify identical to the works in \cite%
{OHSachs}, thus part (2) of Theorem \ref{AMMT} can be restated as follows:

\begin{theorem}
\label{AMMTa}Let $G$ be an oriented hypergraph with adjacency matrix $%
\mathbf{A}_{G}$ and Laplacian matrix $\mathbf{L}_{G}$, then%
\begin{equation*}
\det ([\mathbf{L}_{G}]_{(U;W\mathbf{)}})=\dsum\limits_{c\in \mathfrak{C}%
(U;W;G)}\varepsilon ^{\prime }(c)\cdot (-1)^{pc(c)}\cdot (-1)^{op(c)+np(c)}%
\text{.}
\end{equation*}%
Where $\varepsilon ^{\prime }(c)$ is the number of inversions in paths parts
of the natural bijection from $\overline{U}$ to $\overline{W}$.
\end{theorem}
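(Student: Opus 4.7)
The plan is to start from Theorem \ref{AMMT}(2) and repackage each summand $\varepsilon(c)\cdot(-1)^{on(c)+nn(c)}$ contributor by contributor, separating the contribution of the circles from that of the paths. Each contributor $c\in\mathfrak{C}(U;W;G)$ induces a natural bijection $\pi_{c}:\overline{U}\to\overline{W}$ whose image decomposes into disjoint non-adjacency-trivial components that are either directed paths (running from a vertex of $\overline{U}\setminus\overline{W}$ to a vertex of $\overline{W}\setminus\overline{U}$) or directed circles (supported entirely in $\overline{U}\cap\overline{W}$). Because the sign of $\pi_{c}$ is multiplicative over these disjoint blocks, and because a circle of length $\ell$ acts on its vertices as a cyclic permutation of sign $(-1)^{\ell-1}$, one gets
\begin{equation*}
\varepsilon(c)=\varepsilon'(c)\cdot\prod_{K}(-1)^{\ell_{K}-1},
\end{equation*}
where $K$ ranges over the circle components of the image of $c$ and $\ell_{K}$ denotes the length of $K$.

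Next I would split $on(c)=oc(c)+op(c)$ and $nn(c)=nc(c)+np(c)$ into their path and circle parts and substitute into Theorem \ref{AMMT}(2), rewriting each summand as
\begin{equation*}
\varepsilon'(c)\cdot(-1)^{op(c)+np(c)}\cdot\prod_{K}(-1)^{\ell_{K}-1+[\ell_{K}\text{ odd}]+[K\text{ negative}]}.
\end{equation*}
A four-case check on the per-circle exponent --- even length positive, even length negative, odd length positive, odd length negative --- shows that it always reduces modulo $2$ to $[K\text{ positive}]$, so the circle product collapses to $(-1)^{pc(c)}$. This is the same circle simplification used in \cite{OHSachs} to convert the Laplacian permanent formula into the Laplacian determinant formula, now carried across to the all-minors setting. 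Assembling the remaining factors gives the claimed identity.

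The main obstacle is to pin down cleanly the decomposition of $\varepsilon(c)$ when $U\neq W$: since $\pi_{c}$ is a bijection between two different ordered sets rather than a permutation of a single set, its sign depends on the choice of orderings of $\overline{U}$ and $\overline{W}$, and one must check that after fixing compatible orderings the contribution of each circle component is genuinely the sign of its cyclic block, independent of how the entries of circles and paths interleave in those orderings. Once this independence is verified, the parity bookkeeping above is routine and proceeds identically in spirit to the circle collapse carried out in \cite{OHSachs}.
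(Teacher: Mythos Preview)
Your plan is precisely the paper's: start from Theorem~\ref{AMMT}(2), separate $\varepsilon(c)$ into a circle factor and a residual path factor $\varepsilon'(c)$, and then collapse each circle's contribution to $(-1)^{[K\text{ positive}]}$ exactly as in \cite{OHSachs}. The paper compresses all of this into a single sentence; your four-case parity check is correct and makes the circle collapse explicit.

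Your flagged obstacle is real, and in fact the naive block-multiplicativity you invoke does \emph{not} hold for bijections between distinct ordered sets. For instance, with $\overline{U}=\{1,2,4\}$, $\overline{W}=\{2,3,4\}$ and $\pi_c$ sending $1\mapsto 3$, $2\mapsto 4$, $4\mapsto 2$, one finds $\varepsilon(c)=+1$, while the circle $(2\,4)$ has sign $-1$ and the path block $\{1\}\to\{3\}$ has sign $+1$, so the product is $-1$. The correct resolution is not block-multiplicativity but an inversion split: since every circle vertex lies in $\overline{U}\cap\overline{W}$, the inversions of $\pi_c$ with \emph{both} endpoints among circle vertices coincide with the inversions of the genuine permutation $\pi_c|_{S}$, whose sign is $\prod_K(-1)^{\ell_K-1}$. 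Taking $\varepsilon'(c)$ to be $(-1)$ to the count of the remaining inversions --- those with at least one endpoint on a path, which is how the paper's phrase ``inversions in paths parts'' should be read --- makes the factorization $\varepsilon(c)=\varepsilon'(c)\cdot\prod_K(-1)^{\ell_K-1}$ automatic, and then your argument goes through verbatim.
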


Comparing, the non-zero elements of $\mathfrak{C}(U;W;G)$ are the
Chaiken-type structures in \cite{Seth1} with multiplicities replaced with
backstep maps.

\section{Contributor Structure of Bidirected Graphs}

\label{CMap}

\subsection{Pre-contributors and Incidence Packing}

An oriented hypergraph in which every edge has exactly $2$ incidences is a \emph{bidirected graph}, 
and can be regarded as orientations of signed graphs (see \cite{MR0267898, SG, OSG}). 

Throughout this section, $G$ is a bidirected graph in which every connected
component contains at least one adjacency, and $\overrightarrow{P}%
_{1}$ is a directed path of length $1$. A \emph{pre-contributor
of }$\emph{G}$ is an incidence preserving function $p:\dcoprod\limits_{v\in
V}\overrightarrow{P}_{1}\rightarrow G$ with $p(t_{v})=v$. That is, the
disjoint union of $\left\vert V\right\vert $ copies of $\overrightarrow{P}%
_{1}$ into $G$ such that every tail-vertex labeled by $v$ is mapped to $v$.

Consider a pre-contributor $p$ with $p(t_{v})\neq p(h_{v})$ for vertex $v\in
V$. \emph{Packing a directed adjacency of a pre-contributor }$p$\emph{\ into
a backstep at vertex }$v$ is a pre-contributor $p_{v}$ such that $p_{v}=p$ for all 
$u\in V\smallsetminus v$, and for vertex $v$ 
\begin{eqnarray*}
p((\overrightarrow{P}_{1})_{v}) &=&(v,i,e,j,w)\text{, }i\neq j\text{,} \\
\text{and }p_{v}((\overrightarrow{P}_{1})_{v}) &=&(v,i,e,i,v)\text{.}
\end{eqnarray*}%
Thus, the head-incidence and head-vertex of adjacency $p((\overrightarrow{P}%
_{1})_{v})$ are identified to the tail-incidence and tail-vertex.

\emph{Unpacking a backstep of a pre-contributor }$p$\emph{\ into an adjacency out of vertex }$%
v $\emph{\ }is a pre-contributor $p^{v}$ is defined analogously but for
vertex $v$, the head-incidence and head-vertex of backstep $p((%
\overrightarrow{P}_{1})_{v})$ are identified to the unique incidence and
vertex that would complete the adjacency in bidirected graph $G$. Note that
this is unique for a bidirected graph since every edge has exactly two
incidences, but this is not the case in if there are edges of size greater
than $2$.

For a bidirected graph $G$ and vertex $v$, let $\mathfrak{P}(G)$ be the set
of all pre-contributors of $G$, $\mathfrak{P}_{v}(G)$ be the set of
pre-contributors with a backstep at $v$, and let $\mathfrak{P}^{v}(G)$ be
the set of pre-contributors with a directed adjacency from $v$.

\begin{lemma}
\label{PUInverses}Packing and unpacking are inverses between $\mathfrak{P}%
_{v}$ and $\mathfrak{P}^{v}$.
\end{lemma}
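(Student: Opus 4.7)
The plan is to unpack the definitions and verify the four required conditions: that packing takes $\mathfrak{P}^{v}(G)$ into $\mathfrak{P}_{v}(G)$, that unpacking takes $\mathfrak{P}_{v}(G)$ into $\mathfrak{P}^{v}(G)$, and that the two composites restrict to the identity on their respective domains. Since both operations only alter the image of the $v$-labeled copy of $\overrightarrow{P}_{1}$ and leave every other component of the pre-contributor fixed, it will suffice to track what happens to that single component.

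First I would fix $p \in \mathfrak{P}^{v}(G)$, so that $p((\overrightarrow{P}_{1})_{v}) = (v,i,e,j,w)$ with $i \neq j$. By definition $p_v$ replaces this adjacency by the backstep $(v,i,e,i,v)$, which is incidence-preserving because $\iota(i) = (v,e)$ is unchanged and the tail condition $p_v(t_v) = v$ still holds; hence $p_v \in \mathfrak{P}_{v}(G)$. Similarly, if $p \in \mathfrak{P}_{v}(G)$ with $p((\overrightarrow{P}_{1})_{v}) = (v,i,e,i,v)$, then since $G$ is bidirected the edge $e$ has exactly two incidences, so the incidence $j \neq i$ with $(proj_E \circ \iota)(j) = e$ and its corresponding vertex $w$ with $\iota(j) = (w,e)$ are uniquely determined. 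Thus $p^{v}((\overrightarrow{P}_{1})_{v}) = (v,i,e,j,w)$ is a well-defined directed adjacency from $v$, placing $p^v \in \mathfrak{P}^{v}(G)$. This is the one place where the bidirected hypothesis is essential, and it is also the only potential obstacle: in a larger edge there would be multiple candidates for $(j,w)$ and the unpacking would fail to be a function.

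Next I would verify the two compositions. For $p \in \mathfrak{P}^{v}(G)$ with $p((\overrightarrow{P}_{1})_{v}) = (v,i,e,j,w)$, packing produces the backstep $(v,i,e,i,v)$; unpacking this backstep recovers the unique second incidence of $e$, which is $j$, and its vertex $w$, so $(p_v)^{v}((\overrightarrow{P}_{1})_{v}) = (v,i,e,j,w) = p((\overrightarrow{P}_{1})_{v})$. For $p \in \mathfrak{P}_{v}(G)$ with $p((\overrightarrow{P}_{1})_{v}) = (v,i,e,i,v)$, unpacking yields $(v,i,e,j,w)$ where $j$ is the unique other incidence on $e$; subsequently packing identifies this head back to the tail incidence $i$ and tail vertex $v$, recovering the original backstep $(v,i,e,i,v)$. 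Since all other components of $p$ were untouched throughout, the two composites are the identity on $\mathfrak{P}^{v}(G)$ and $\mathfrak{P}_{v}(G)$ respectively, which is the required statement.
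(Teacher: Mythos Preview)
Your proof is correct and follows the same approach as the paper, which simply asserts that $(p_v)^v = p$ and $(p^v)_v = p$ ``by definition'' for the appropriate pre-contributors. Your version is more detailed in that it explicitly checks well-definedness of the maps and highlights where the bidirected hypothesis is used, but the underlying argument is identical.
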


\begin{proof}
By definition $(p_{v})^{v}=p$ and $(p^{v})_{v}=p$ for appropriate
contributors in $\mathfrak{P}^{v}$ or $\mathfrak{P}_{v}$.
\qed \end{proof}

\begin{lemma}
\label{PCommutes}Packing is commutative.
\end{lemma}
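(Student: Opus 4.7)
The plan is to exploit the fact that a pre-contributor $p$ is defined on the \emph{disjoint} union $\coprod_{v\in V}\overrightarrow{P}_{1}$, so the restrictions of $p$ to different summands $(\overrightarrow{P}_{1})_v$ and $(\overrightarrow{P}_{1})_u$ with $v\neq u$ are independent data. By construction, packing at vertex $v$ alters only $p\big|_{(\overrightarrow{P}_{1})_v}$ and leaves $p\big|_{(\overrightarrow{P}_{1})_u}$ untouched for every $u\neq v$. This is the entire substance of the lemma.

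Concretely, I would first dispose of the trivial case $u=v$, and then fix distinct vertices $u,v\in V$ and assume $p\in \mathfrak{P}^{u}(G)\cap \mathfrak{P}^{v}(G)$, so both packings are defined on $p$. The next step is to verify the precondition for the iterated operations: since packing at $v$ does not alter $p\big|_{(\overrightarrow{P}_{1})_u}$, the pre-contributor $p_{v}$ still has a directed adjacency out of $u$, so $(p_{v})_{u}$ is defined; by symmetry $(p_{u})_{v}$ is defined as well.

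Finally, I would compare $(p_{v})_{u}$ and $(p_{u})_{v}$ summand by summand: on $(\overrightarrow{P}_{1})_u$ both maps are the backstep $(u,i,e,i,u)$ obtained from $p((\overrightarrow{P}_{1})_u)$; on $(\overrightarrow{P}_{1})_v$ both are the backstep $(v,i',e',i',v)$ obtained from $p((\overrightarrow{P}_{1})_v)$; and on every other $(\overrightarrow{P}_{1})_w$ both agree with the original $p$. Hence $(p_{v})_{u}=(p_{u})_{v}$ as functions on $\coprod_{v\in V}\overrightarrow{P}_{1}$.

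There is really no obstacle here beyond bookkeeping; the only thing that could go wrong is a failure of the precondition after the first packing, and that is ruled out immediately by the locality of packing to a single summand. The argument does \emph{not} use that $G$ is bidirected — locality alone suffices — which is consistent with the fact that only \emph{unpacking} required the edge-size-two hypothesis for well-definedness.
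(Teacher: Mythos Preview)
Your proof is correct and follows essentially the same approach as the paper: both arguments compare $(p_v)_u$ and $(p_u)_v$ summand by summand, using that packing at $v$ affects only the restriction $p\big|_{(\overrightarrow{P}_{1})_v}$. You are slightly more explicit than the paper in verifying that the second packing is still defined after performing the first, and in noting that bidirectedness is not needed here, but the underlying idea is identical.
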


\begin{proof}

Let $p\in \mathfrak{P}^{v}\cap \mathfrak{P}^{w}$, $p_{vw}:=p_{w}\circ p_{v}$%
, and $p_{wv}:=p_{v}\circ p_{w}$. By definition, $p_{vw}=p_{wv}$ for all $(%
\overrightarrow{P}_{1})_{u}$ with $u\in V\smallsetminus \{v,w\}$. For
vertices $v$ and $w$, 
\begin{eqnarray*}
p_{v}((\overrightarrow{P}_{1})_{w}) &=&p((\overrightarrow{P}_{1})_{w})\text{,%
} \\
\text{and }p_{w}((\overrightarrow{P}_{1})_{v}) &=&p((\overrightarrow{P}%
_{1})_{v})\text{.}
\end{eqnarray*}%
Giving, 
\begin{eqnarray*}
p_{vw}((\overrightarrow{P}_{1})_{w}) &=&p_{w}((\overrightarrow{P}%
_{1})_{w})=p_{wv}((\overrightarrow{P}_{1})_{w})\text{,} \\
\text{and }p_{vw}((\overrightarrow{P}_{1})_{v}) &=&p_{v}((\overrightarrow{P}%
_{1})_{v})=p_{wv}((\overrightarrow{P}_{1})_{v})\text{.}
\end{eqnarray*}
\qed \end{proof}

\begin{lemma}
\label{UCommutes}Unpacking is commutative.
\end{lemma}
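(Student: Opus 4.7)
The plan is to mirror the proof of Lemma \ref{PCommutes} essentially verbatim, exploiting the fact that packing and unpacking are structurally dual local operations: each modifies a pre-contributor only on a single copy of $\overrightarrow{P}_{1}$. Given $p\in \mathfrak{P}_{v}(G)\cap \mathfrak{P}_{w}(G)$, I would set $p^{vw}:=(p^{v})^{w}$ and $p^{wv}:=(p^{w})^{v}$. Before comparing them, the iterated unpackings need to be well defined. Since unpacking at $v$ leaves the image on $(\overrightarrow{P}_{1})_{w}$ untouched, the backstep at $w$ persists, so $p^{v}\in \mathfrak{P}_{w}(G)$; the symmetric argument gives $p^{w}\in \mathfrak{P}_{v}(G)$.

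For the agreement step, on every $(\overrightarrow{P}_{1})_{u}$ with $u\in V\smallsetminus\{v,w\}$ both iterated unpackings coincide with $p$. On $(\overrightarrow{P}_{1})_{v}$, the map $p^{vw}$ agrees with $p^{v}$ because the outer unpacking at $w$ acts as the identity on $(\overrightarrow{P}_{1})_{v}$; the map $p^{wv}$ also agrees with $p^{v}$ because the inner $p^{w}$ still carries the original backstep at $v$, and the completing incidence/vertex in $G$ that determines the unpacking depends only on that backstep. The case of $(\overrightarrow{P}_{1})_{w}$ is symmetric, yielding $p^{vw}=p^{wv}$.

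A cleaner formal alternative sidesteps the direct verification by invoking Lemmas \ref{PUInverses} and \ref{PCommutes}: iterated packing is a bijection $\mathfrak{P}^{v}(G)\cap \mathfrak{P}^{w}(G)\rightarrow \mathfrak{P}_{v}(G)\cap \mathfrak{P}_{w}(G)$ whose two possible orderings agree by Lemma \ref{PCommutes}, and iterated unpacking is its two-sided inverse by Lemma \ref{PUInverses}; inverses of equal bijections are equal. The main — indeed only — obstacle in either approach is the locality check, which hinges on the bidirected hypothesis: because every edge has exactly two incidences, the adjacency completing a given backstep is uniquely determined, so the unpacking at a vertex depends only on data local to that vertex and cannot be disturbed by unpackings elsewhere. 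This is precisely why, as the authors note, unpacking fails to be well defined (let alone commutative) once edges of size larger than $2$ are permitted.
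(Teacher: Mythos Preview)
Your proposal is correct and your first approach is exactly what the paper does: its entire proof reads ``Proof is identical to packing after reversing subscript and superscripts,'' which is precisely the duality you spell out. Your alternative via Lemmas \ref{PUInverses} and \ref{PCommutes} is a valid and slightly slicker route, but the paper does not take it.
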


\begin{proof}

Proof is identical to packing after reversing subscript and superscripts.
\qed \end{proof}

\subsection{Contributors and Activation}

A contributor of $G$ is a pre-contributor where $\{p(h_{v})\mid v\in V\}=V$.

For each $c\in \mathfrak{C}(G)$ let $tc(c)$ be the total number of circles
in $c$; a degenerate $2$-circle (a closed $2$-weak-walk) is considered a
circle, while a degenerate $1$-circle (a backstep) is not. \emph{Activating
a circle of contributor }$c$ is a minimal sequence of unpackings that
results in a new contributor $c^{\prime }$ such that $tc(c)=tc(c^{\prime })-1$. 
\emph{Deactivating a circle of contributor }$c$ is a minimal sequence of
packings that results in a new contributor $c^{\prime \prime }$ such that $%
tc(c)=tc(c^{\prime \prime })+1$. Immediately from the definition we have:

\begin{lemma}
Let $c,d\in \mathfrak{C}(G)$. Contributor $d$ can be obtained by activating
circles of $c$ if, and only if, $c$ can be obtained by deactivating a
circles of $d$. Moreover, the activation/deactivation sets are equal.
\end{lemma}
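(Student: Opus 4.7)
The plan is to read off the result directly from Lemma \ref{PUInverses}, using the fact that both activation and deactivation are defined in terms of minimal-length sequences of mutually inverse operations.

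For the forward implication, suppose $d$ is obtained from $c$ by activating a circle. By definition, there is a minimal sequence of unpackings $c = c_0 \to c_1 \to \cdots \to c_k = d$ with $tc(d) = tc(c) + 1$, where each step $c_{i-1} \to c_i$ unpacks a backstep at some vertex $v_i$. By Lemma \ref{PUInverses}, the packing operation at $v_i$ inverts that unpacking step, so reversing the sequence yields a packing sequence $d = c_k \to c_{k-1} \to \cdots \to c_0 = c$ with $tc(c) = tc(d) - 1$. The converse is entirely symmetric: any deactivation sequence from $d$ to $c$ reverses via Lemma \ref{PUInverses} to an unpacking sequence from $c$ to $d$.

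To complete the biconditional, I would verify that minimality is preserved under reversal. If the reversed packing sequence from $d$ to $c$ were not of minimum length, then a strictly shorter packing sequence realizing the same circle drop would exist; reversing \emph{that} via Lemma \ref{PUInverses} would produce a strictly shorter unpacking sequence from $c$ to $d$, contradicting the minimality of the original activation. The same argument runs in the other direction, so ``deactivates $d$ to $c$'' and ``activates $c$ to $d$'' are logically equivalent.

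The moreover clause then follows because reversal is an involution that preserves the underlying set of operations and intermediate contributors. Given an activation sequence $c = c_0 \to \cdots \to c_k = d$ and its reversed deactivation sequence $d = c_k \to \cdots \to c_0 = c$, the intermediate contributors $\{c_0, c_1, \ldots, c_k\}$ are literally the same in both, and the multiset of vertex-labeled pack/unpack operations in one is the set of inverses of the operations in the other. Thus the activation set associated to passing from $c$ to $d$ equals the deactivation set associated to passing from $d$ to $c$. There is no real obstacle here; the only point requiring any care is the minimality bookkeeping, which is immediate from the length-preserving nature of reversal.
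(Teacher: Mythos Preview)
Your argument is correct and is exactly the intended one: the paper does not give a proof at all, stating only that the lemma follows ``immediately from the definition,'' and your expansion via Lemma~\ref{PUInverses} is precisely what underlies that remark. The only cosmetic gap is that the lemma speaks of activating \emph{circles} (possibly several), while you treat a single activation; the general case follows at once by iterating your argument.
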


Define the \emph{activation partial order} $\leq _{a}$ where $c\leq _{a}d$ if 
$d$ is obtained by a sequence of activations starting with $c$, and the 
\emph{activation equivalence relation} $\sim _{a}$where $c\sim _{a}d$ if $%
c\leq _{a}d$ or $d\leq _{a}c$. The elements of $\mathfrak{C}(G)/\sim _{a}$
are called the \emph{activation classes of }$G$.

\begin{example}
\label{Ex1}Figure \ref{ContAre} is a bidirected graph (with incidences omitted), and some contributors are depicted, sorted by their associated permutation.
\begin{figure}[H]
\centering\includegraphics[scale=1]{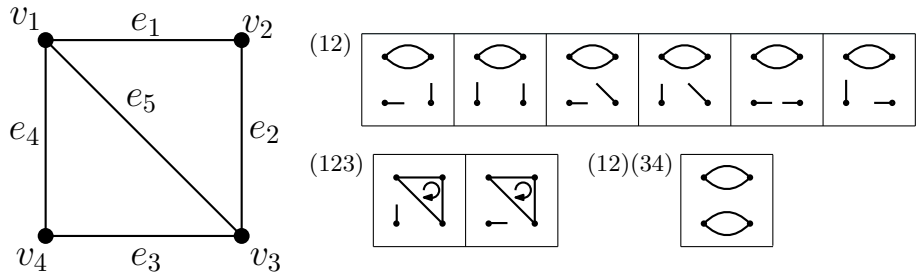}
\caption{Understanding contributors.}
\label{ContAre}
\end{figure}
Observe that the fifth contributor in $(12)$ is below the $(12)(34)$ contributor in the activation partial order.
\end{example}

\begin{lemma}
The minimal elements of activation classes are incomparable, consist of only
backsteps, and correspond to the identity permutation.
\end{lemma}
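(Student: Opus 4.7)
The plan is to characterize the minimal elements of each activation class as the backstep-only contributors and then deduce incomparability directly. The key observation is that every contributor induces a permutation $\pi_c \colon V \to V$ defined by $\pi_c(v) = p(h_v)$, which is well-defined since $\{p(h_v) \mid v \in V\} = V$. The non-trivial cycles of $\pi_c$ correspond exactly to the circles of length $\geq 2$ in the image of $c$, while a fixed point $\pi_c(v) = v$ is realized either by a backstep $(v,i,e,i,v)$ or a loop $(v,i,e,j,v)$ with $i \neq j$ (a 1-circle).

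First I would show that any contributor $c$ with $tc(c) \geq 1$ can be deactivated. Given a circle of length $k$ realized by a cycle $v_1 \to v_2 \to \cdots \to v_k \to v_1$ in $\pi_c$ (with $k=1$ allowed when the circle is a loop), I would pack the adjacency $p((\overrightarrow{P}_1)_{v_j})$ at $v_j$ for each $j = 1, \ldots, k$. By Lemma \ref{PCommutes} the resulting function $c''$ is independent of the order of packings, each $v_j$ acquires a backstep, and the image condition is preserved since $\{v_1,\ldots,v_k\}$ is mapped bijectively to itself under the new $p(h_{\cdot})$. Any proper subset of these packings would leave at least one vertex in the cycle unmapped, violating the image condition, so the length-$k$ sequence is in fact minimal and gives $tc(c'') = tc(c) - 1$. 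Thus $c$ admits a strict deactivation and is not minimal in $\leq_a$; conversely, if $tc(c) = 0$ then no deactivation exists (any deactivation demands $tc(c'') = tc(c) - 1 \geq 0$), so $c$ is minimal.

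For such a minimal $c$ the permutation $\pi_c$ has no cycle of length $\geq 2$ and no fixed-point loop, forcing $\pi_c$ to be the identity and each $p((\overrightarrow{P}_1)_v) = (v,i,e,k,v)$ to satisfy $i = k$, i.e., a backstep. Incomparability then follows from the general fact about posets: if $c$ and $d$ are both minimal with $c \leq_a d$, then the activation sequence from $c$ to $d$ must have length zero (each activation strictly increases $tc$, yet both have $tc = 0$), so $c = d$, and elements in distinct activation classes are incomparable by definition of $\sim_a$. The main obstacle is the deactivation step, specifically verifying that packing every adjacency along a permutation cycle simultaneously preserves the contributor condition while any partial packing along the same cycle does not; once this bookkeeping is established, the rest of the lemma is a direct consequence.
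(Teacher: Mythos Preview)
Your proposal is correct and supplies precisely the argument the paper leaves implicit: the lemma is stated without proof in the paper, as an immediate consequence of the definitions of activation/deactivation and the contributor condition. Your characterization of minimal contributors as those with $tc(c)=0$ via the deactivation of any circle (including the verification that packing along a full permutation cycle is the minimal sequence restoring the contributor condition), followed by the observation that $tc(c)=0$ forces backsteps only and hence the identity permutation, is exactly the intended reasoning.
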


\begin{lemma}
\label{IsBoolean}All activation classes of $G$ are boolean lattices.
\end{lemma}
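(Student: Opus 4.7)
The plan is to exhibit, for each activation class $A$, an order-isomorphism with a boolean lattice $2^{\{1,\ldots,k\}}$. By the preceding lemma, $A$ has a minimum $c_0$ whose every $v$-component is a backstep $(v,i_v,e_v,i_v,v)$; let $\phi(v)$ denote the other endpoint of $e_v$ in $G$. Because $G$ is bidirected, unpacking the backstep at $v$ is uniquely defined and produces the adjacency $(v,i_v,e_v,j_v,\phi(v))$, moving the head of the $v$-component from $v$ to $\phi(v)$. For the result to remain a contributor the heads must continue to cover $V$ exactly, so the backsteps at $\phi(v),\phi^2(v),\ldots$ are forced to be unpacked in succession, and the process can terminate only when it closes back at $v$. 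Hence the circles activatable from $c_0$ are in bijection with the cycles of the functional graph of $\phi\colon V\to V$; enumerate them as $C_1,\ldots,C_k$.

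Since cycles of a functional graph are pairwise vertex-disjoint, for each $S\subseteq\{1,\ldots,k\}$ the pre-contributor $c_S$ obtained from $c_0$ by unpacking the backstep at every vertex of $\bigcup_{i\in S}C_i$ is well-defined by Lemma \ref{UCommutes}, is a contributor whose set of circles is exactly $\{C_i:i\in S\}$, and satisfies $c_0\leq_a c_S$. Distinct $S$ produce distinct circle sets, so $S\mapsto c_S$ is injective. Any $c\in A$ satisfies $c_0\leq_a c$ and hence arises from $c_0$ by a sequence of activations; the forced-continuation observation shows each such activation creates one of the $C_i$, so $c=c_S$ for a unique $S$, establishing surjectivity onto $A$.

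For order preservation, if $S\subseteq T$ then activating $\{C_i:i\in T\setminus S\}$ from $c_S$ yields $c_T$, so $c_S\leq_a c_T$; conversely if $c_S\leq_a c_T$ then the intermediate activations only unpack backsteps, leaving the pre-existing circles of $c_S$ intact, so the circles of $c_T$ contain those of $c_S$ and $S\subseteq T$. Consequently $S\mapsto c_S$ is an order-isomorphism of $A$ with $2^{\{1,\ldots,k\}}$, and $A$ is a boolean lattice.

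The main obstacle is the forced-continuation step of the first paragraph, which identifies minimal unpacking sequences from $c_0$ with cycles of $\phi$. This is where the bidirected hypothesis, that edges have size exactly two and unpacking is therefore uniquely defined, is essential, and it is the obstruction to generalizing the lemma beyond bidirected graphs. Once this identification is in hand, the commutativity of unpacking (Lemma \ref{UCommutes}) and its invertibility with packing (Lemma \ref{PUInverses}) convert the subset parametrization $S\mapsto c_S$ into the required order-isomorphism.
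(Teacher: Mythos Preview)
Your proof is correct and follows essentially the same approach as the paper's: both identify an activation class with the boolean lattice of subsets of its activatable circles, with your argument making this explicit via the functional graph of the endpoint map $\phi$ while the paper leaves the bijection implicit. One small point: the preceding lemma only asserts that minimal elements consist of backsteps, not that each class has a unique minimum, so your opening sentence slightly over-cites it; the uniqueness is, however, immediate from the observation that activation and deactivation never change the edge $e_v$ assigned to each vertex.
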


\begin{proof}

For a given activation class consider the set of possible active circles.
The elements of each activation class is ordered by the subsets of active
circles, with unique maximal element having all circles active, and unique
minimal element having all circle inactive.
\qed \end{proof}

We have the following lemma using the facts that (1) every connected
component of $G$ is assumed to have an adjacency, and (2) every contributor
corresponds to a permutation on the vertices, there is at least one circle
that can be activated for each minimal element in each activation class.

\begin{lemma}
Each maximal contributor in a activation class contains at least $1$ circle.
\end{lemma}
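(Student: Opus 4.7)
The plan is to show that from the unique minimum of any activation class one can always perform at least one activation; because each class is a boolean lattice (Lemma \ref{IsBoolean}), its maximum then dominates the activated element and therefore inherits the new circle.

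Let $c_{0}$ be the minimum, so each vertex $v$ carries a backstep $(v,i_{v},e_{v},i_{v},v)$. I would define $\alpha : V \to V$ by letting $\alpha(v)$ be the vertex at the unique incidence on $e_{v}$ distinct from $i_{v}$, noting that $\alpha(v) = v$ precisely when $e_{v}$ is a loop at $v$. Since $V$ is finite, the functional digraph of $\alpha$ must contain a directed cycle $(v_{1},\dots,v_{k})$; I would then simultaneously unpack the backsteps at $v_{1},\dots,v_{k}$, so that the resulting head-vertex assignment equals the $k$-cycle $(v_{1}\cdots v_{k})$ on these vertices and the identity elsewhere. This is a bijection of $V$, so the output is a genuine contributor $c'$ strictly above $c_{0}$ in the activation order, and its image acquires either a new loop at $v_{1}$ (when $k=1$) or a proper $k$-circle through $v_{1},\dots,v_{k}$ (when $k \geq 2$). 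Either way, $tc(c') \geq 1$, and Lemma \ref{IsBoolean} places the maximum of the activation class above $c'$ in the boolean order, so the maximum carries at least that circle as well.

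The main technical point will be verifying that the simultaneous unpacking produces a contributor rather than a pre-contributor violating $\{p(h_{v})\mid v\in V\} = V$. This holds precisely because $\alpha$ restricted to any of its cycles is automatically a permutation of that cycle, while all vertices outside the cycle keep their backsteps and their identity assignment. The hypothesis that each connected component contains an adjacency enters only to guarantee that $\alpha$ is defined on all of $V$; the remainder of the argument rests on the finiteness of $V$ together with the permutation interpretation of contributors---exactly the two facts flagged by the authors immediately before the lemma.
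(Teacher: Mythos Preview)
Your argument is correct and is precisely the natural way to make the paper's one-line justification rigorous: the paper offers no formal proof, only the remark that the adjacency hypothesis together with the permutation interpretation of contributors forces at least one activatable circle above every minimal element, and your functional-digraph construction of $\alpha$ is exactly how one verifies that claim. One small quibble: the ``two facts'' the authors flag are the adjacency hypothesis and the permutation property, not finiteness of $V$---your last paragraph slightly misreads which ingredients they name, though of course finiteness is implicitly used and your identification of where hypothesis~(1) actually enters is accurate.
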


\begin{corollary}
\label{NonTrivialCC}Each activation class has at least $2$ members.
\end{corollary}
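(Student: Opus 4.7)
The plan is to obtain the cardinality bound by exhibiting two distinct elements in each activation class, taking them to be the top and bottom of the boolean lattice supplied by Lemma \ref{IsBoolean}. The two immediately preceding results essentially do all the work: Lemma \ref{IsBoolean} gives each class $\mathcal{A}$ a unique maximum $\hat{c}$ (all circles active) and a unique minimum $\check{c}$ (all circles inactive), while the lemma stated just before the corollary guarantees that $\hat{c}$ carries at least one circle.

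Concretely, I would first fix an activation class $\mathcal{A}$ and apply Lemma \ref{IsBoolean} to pick out $\hat{c}$ and $\check{c}$ together with the indexing of $\mathcal{A}$ by subsets of the set of activatable circles. Next, I would invoke the preceding lemma to conclude that the subset of active circles associated with $\hat{c}$ is non-empty, whereas by construction the subset associated with $\check{c}$ is empty. Because the indexing is a bijection, these two subsets correspond to distinct contributors in $\mathcal{A}$, yielding $|\mathcal{A}| \geq 2$.

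I do not anticipate any genuine obstacle; the corollary is essentially bookkeeping once the lattice structure is in hand. The only point meriting a sentence of justification is that the standing hypothesis — every connected component of $G$ contains at least one adjacency — is what rules out the degenerate activation classes consisting solely of an all-backstep contributor, and this hypothesis has already been used to establish the preceding lemma on which the argument rests.
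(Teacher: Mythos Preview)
Your proposal is correct and matches the paper's intended reasoning: the corollary is stated without proof, following immediately from Lemma \ref{IsBoolean} together with the preceding lemma that each maximal contributor has at least one circle, exactly as you argue.
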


\begin{example}
\label{Ex2}Figure \ref{Cclasses} shows $3$ activation classes of the graph from Example \ref{Ex1}. 
\begin{figure}[H]
\centering\includegraphics[scale=1]{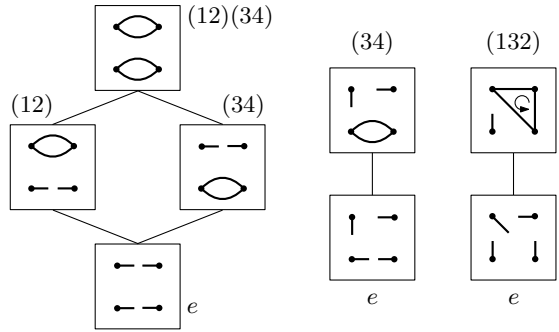}
\caption{Activation classes are boolean.}
\label{Cclasses}
\end{figure}
The activation classes are ranked by the number of circles, and the minimal element corresponds to the identity permutation.
\end{example}

\subsection{Partitioning Activation Classes}

\label{Cuts}

For $u,w\in V$ two contributors $c$ and $d$ are $uw$\emph{-equivalent},
denoted $c\sim _{uw}d$, if $c(h_{u})=d(h_{u})=w$. Since $\sim _{uw}$ only
collects contributors in which the image of $(\overrightarrow{P}_{1})_{u}$
has head-vertex mapped to $w$ we have:

\begin{lemma}
$\mathfrak{C}(G)/(\sim _{u_{1}w_{1}}\circ \sim _{u_{2}w_{2}})=\mathfrak{C}%
(G)/(\sim _{u_{2}w_{2}}\circ \sim _{u_{1}w_{1}})$ for $%
u_{1},w_{1},u_{2},w_{2}\in V$.
\end{lemma}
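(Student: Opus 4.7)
The plan is to unravel the definition of $\sim_{uw}$ and exploit the fact that it is determined entirely by the single value $c(h_u)$, which is a coordinate of the contributor data that is independent from the data at any other vertex. Since a pre-contributor has domain $\dcoprod_{v\in V}\overrightarrow{P}_1$, the image of the $u_1$-summand is independent of the image of the $u_2$-summand whenever $u_1\neq u_2$, so the two constraints $c(h_{u_1})=w_1$ and $c(h_{u_2})=w_2$ can be imposed independently on any contributor.

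First I would expand the composition as a composition of relations: $(c,d)\in {\sim_{u_1w_1}}\circ {\sim_{u_2w_2}}$ iff there is some $e\in \mathfrak{C}(G)$ with $c(h_{u_1})=e(h_{u_1})=w_1$ and $e(h_{u_2})=d(h_{u_2})=w_2$. This amounts to the conjunction that $c(h_{u_1})=w_1$, $d(h_{u_2})=w_2$, and a witness contributor $e$ exists simultaneously realizing $w_1$ at $h_{u_1}$ and $w_2$ at $h_{u_2}$. The reversed composition produces the condition symmetric in the two labeled pairs, with the roles of $(u_1,w_1)$ and $(u_2,w_2)$ swapped. By the coordinate independence noted above, the witness condition is invariant under this symmetry, so both compositions give the same relation on $\mathfrak{C}(G)$, namely the one identifying contributors by the pair of values at $(h_{u_1},h_{u_2})$ relative to $(w_1,w_2)$; taking quotients therefore yields the same set of classes.

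The only subtle case is $u_1=u_2$: if additionally $w_1=w_2$ the two relations are literally identical, and if $w_1\neq w_2$ no single contributor can satisfy both $c(h_{u_1})=w_1$ and $c(h_{u_1})=w_2$, so both compositions are empty and the quotients are trivially equal. I do not anticipate any real obstacle; the lemma is essentially bookkeeping, reflecting that distinct vertices parametrize disjoint summands of the contributor's domain, so relations defined by conditions on independent coordinates commute and induce identical quotients.
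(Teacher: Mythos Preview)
Your overall idea---that $\sim_{uw}$ depends only on the single value $c(h_u)$, a coordinate independent of the data at other vertices---is exactly what the paper invokes; the paper gives no proof beyond the sentence immediately preceding the lemma. However, your explicit claim that ``both compositions give the same relation on $\mathfrak{C}(G)$'' is not quite right under literal relation composition. From your own expansion, $\sim_{u_1w_1}\circ\sim_{u_2w_2}$ relates $(c,d)$ when $c(h_{u_1})=w_1$ and $d(h_{u_2})=w_2$ (together with the witness), while the reversed composition requires $c(h_{u_2})=w_2$ and $d(h_{u_1})=w_1$. These are different conditions on the \emph{ordered} pair $(c,d)$: take $c\neq d$ with $c(h_{u_1})=w_1$, $c(h_{u_2})\neq w_2$, $d(h_{u_1})\neq w_1$, $d(h_{u_2})=w_2$, and a witness $e$; then $(c,d)$ lies in one composite but not the other. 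So the two compositions are converses of one another, not equal.

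The repair is immediate: since each $\sim_{u_iw_i}$ is symmetric, the two composites are mutual converses and hence generate the same equivalence relation, so the quotients coincide. Alternatively---and more in keeping with how the paper subsequently uses $\circ$, where $\sim_{uw}\circ\sim_a$ is said to \emph{refine} $\sim_a$---one may simply read $\circ$ as successive imposition of constraints; commutativity is then tautological and your coordinate-independence remark already suffices.
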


As $w$ varies, the composition $\sim _{u\bullet }:=\mathop\bigcirc
\limits_{w\in V}\sim _{uw}$ is well defined without the need for a total
ordering on $V$. Moreover,

\begin{lemma}
$\mathfrak{C}(G)/(\sim _{uw}\circ \sim _{a})=\mathfrak{C}(G)/(\sim _{a}\circ
\sim _{uw})$ or any $u,w\in V$.
\end{lemma}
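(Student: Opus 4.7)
The plan is to argue that both quotient partitions in the statement are equal to the quotient of $\mathfrak{C}(G)$ by the smallest equivalence relation containing $\sim_{uw}\cup\sim_a$. This is the natural reading of the composition notation (and is also implicit in the preceding lemma on two $\sim_{uw}$-style relations), since a composition of two equivalence relations need not itself be transitive and so need not be an equivalence that we can directly quotient by.

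The first step is to note that $\sim_{uw}$ extends to an equivalence relation on all of $\mathfrak{C}(G)$ whose sole nontrivial class is $\{c\in\mathfrak{C}(G) : c(h_u)=w\}$, and that $\sim_a$ is an equivalence relation by definition; both are reflexive. For a pair $(c,d)\in\sim_{uw}$, the choice $e=c$ witnesses $(c,d)\in\sim_{uw}\circ\sim_a$, since $c\sim_a c$ and $c\sim_{uw}d$; symmetrically, for $(c,d)\in\sim_a$ the choice $e=d$ witnesses $(c,d)\in\sim_{uw}\circ\sim_a$. Hence $\sim_{uw}\cup\sim_a\subseteq\sim_{uw}\circ\sim_a$, so the smallest equivalence relation containing $\sim_{uw}\circ\sim_a$ contains the smallest equivalence relation containing $\sim_{uw}\cup\sim_a$. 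The identical argument, with the roles of the two relations swapped, yields the analogous inclusion for $\sim_a\circ\sim_{uw}$.

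For the reverse inclusions, any equivalence relation containing both $\sim_{uw}$ and $\sim_a$ is closed under composition and hence contains each of $\sim_{uw}\circ\sim_a$ and $\sim_a\circ\sim_{uw}$; in particular, the smallest equivalence relation containing $\sim_{uw}\cup\sim_a$ contains both compositions. Combining the two directions, the smallest equivalence relation containing each composition equals the smallest equivalence relation containing $\sim_{uw}\cup\sim_a$, which is manifestly symmetric in the two relations. The two quotient partitions therefore coincide.

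The main subtlety, rather than a hard obstacle, is recognizing that the raw compositions themselves need not agree as relations. Using the boolean structure of activation classes from Lemma \ref{IsBoolean}, one can produce $c,d\in\mathfrak{C}(G)$ in different activation classes with $c(h_u)\neq w$, $d(h_u)=w$, and some $e\in[c]_a$ satisfying $e(h_u)=w$; then $(c,d)\in\sim_{uw}\circ\sim_a$ is witnessed by $e$, while $(c,d)\notin\sim_a\circ\sim_{uw}$ because no element of $[d]_a$ can serve as the required middle term given $c(h_u)\neq w$. Passing to the equivalence closure is what absorbs this asymmetry and makes the lemma go through.
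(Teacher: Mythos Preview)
The paper offers no proof of this lemma; it is asserted as immediate. Your argument is internally sound under the interpretation you adopt, namely that $\mathfrak{C}(G)/(\sim_{uw}\circ\sim_a)$ means the quotient by the smallest equivalence relation containing $\sim_{uw}\cup\sim_a$ (the join of the two partitions). With that reading, both orders of composition generate the same equivalence closure, and your proof goes through.

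The difficulty is that this interpretation conflicts with how the paper actually uses the notation immediately after the lemma. The very next sentence states that ``the relation $\sim_{uw}\circ\sim_a$ \ldots\ refines each $\mathcal{A}\in\mathfrak{C}(G)/\sim_a$,'' and the ensuing Theorem describes how every activation class is cut into a lower and an upper principal order ideal. A join with $\sim_a$ would \emph{coarsen} the activation-class partition, not refine it. The paper's intended meaning of $\circ$ here is the common refinement (meet) of the two partitions---equivalently, ``partition by one relation, then subdivide each block by the other''---and under that reading the lemma is again immediate, since meet of partitions is commutative. Your closing paragraph, which exhibits a pair $(c,d)$ lying in one raw composition but not the other by passing between distinct activation classes, in fact confirms that your construction is merging activation classes rather than subdividing them, which is the opposite of what the surrounding development needs. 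So while nothing you wrote is mathematically wrong for your chosen formalization, it formalizes the wrong operation for this paper.
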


By construction, the relation $\sim _{uw}\circ \sim _{a}$ (subsequently, $%
\sim _{u\bullet }\circ \sim _{a}$) refines each $\mathcal{A}\in \mathfrak{C}%
(G)/\sim _{a}$. Let $\mathcal{A}/\sim _{uw}$\ and $\mathcal{A}/\sim
_{u\bullet }$ denote the refinement of $\mathcal{A}$ by $\sim _{uw}$ or $%
\sim _{u\bullet }$ respectively.

\begin{theorem}
$\mathfrak{C}(G)/(\sim _{u\bullet }\circ \sim _{a})$ is a refinement of each 
activation class in $\mathfrak{C}(G)/\sim _{a}$ into two principal order
ideals (one upper and one lower, with the upper order ideal possibly empty)
that are boolean complements. Moreover, the upper order ideal of activation
class $\mathcal{A}$ is empty if, and only if, $c(h_{u})=u$ for all $c\in 
\mathcal{A}$.
\end{theorem}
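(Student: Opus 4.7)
The plan is to exploit the boolean lattice structure of $\mathcal{A}$ provided by Lemma \ref{IsBoolean}. Let $M$ denote the unique maximal element of $\mathcal{A}$, and let $C_{1},\dots ,C_{k}$ be the circles of $M$. Because $M$ corresponds to a permutation of $V$, the $C_{i}$ are pairwise vertex-disjoint, and every $c\in \mathcal{A}$ is uniquely determined by the subset $S(c)\subseteq \{C_{1},\dots ,C_{k}\}$ of circles active in $c$. Deactivating a circle $C_{i}$ simply packs each of its directed adjacencies into a backstep at the corresponding tail vertex, which modifies $c((\overrightarrow{P}_{1})_{v})$ only at the vertices $v$ lying on $C_{i}$.

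The central local claim is that, for each fixed $u\in V$, the map $c\mapsto c(h_{u})$ takes at most two values on $\mathcal{A}$. Since the $C_{i}$ are vertex-disjoint, at most one of them, call it $C_{i_{0}}$, contains $u$. If no such $C_{i_{0}}$ exists, then $u$ is a fixed point of the permutation of every $c\in \mathcal{A}$, so $c$ has a backstep at $u$ and $c(h_{u})=u$ for every $c\in \mathcal{A}$. Otherwise, writing $u^{+}$ for the successor of $u$ along the directed circle $C_{i_{0}}$, $c(h_{u})$ equals $u^{+}$ when $C_{i_{0}}\in S(c)$ and equals $u$ when $C_{i_{0}}\notin S(c)$, independently of the activation status of the other $C_{j}$.

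From the local claim the theorem reads off directly. In the first subcase $\sim _{u\bullet }$ does not split $\mathcal{A}$ at all: the whole class is the principal lower order ideal generated by $M$, and the complementary upper order ideal is empty. In the second subcase $\sim _{u\bullet }$ splits $\mathcal{A}$ into the two nonempty blocks $\{c\in \mathcal{A}:C_{i_{0}}\notin S(c)\}$ and $\{c\in \mathcal{A}:C_{i_{0}}\in S(c)\}$; under the identification $\mathcal{A}\cong 2^{\{C_{1},\dots ,C_{k}\}}$ these are the principal down-set generated by the contributor with $S=\{C_{1},\dots ,C_{k}\}\setminus \{C_{i_{0}}\}$ and the principal up-set generated by the contributor with $S=\{C_{i_{0}}\}$, which are complementary halves of the hypercube and hence boolean complements of each other. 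Because the second subcase always yields two nonempty blocks, the ``moreover'' clause follows: the upper ideal is empty if and only if we are in the first subcase, which happens if and only if $c(h_{u})=u$ for all $c\in \mathcal{A}$.

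The only real content is the local claim, and its proof is essentially bookkeeping from the definitions of packing and unpacking: each operation alters $c$ only on the single copy $(\overrightarrow{P}_{1})_{v}$ indexed by the vertex it acts on, so toggling $C_{i_{0}}$ flips $c(h_{u})$ between $u$ and $u^{+}$ exactly for $u\in C_{i_{0}}$ and leaves $c(h_{u})$ unchanged for any $u\notin C_{i_{0}}$. Order-independence of any sequence of togglings is guaranteed by Lemmas \ref{PCommutes} and \ref{UCommutes}, so the indexing by $S(c)$ is unambiguous.
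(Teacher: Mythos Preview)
Your proof is correct and follows essentially the same approach as the paper's. Both arguments exploit the boolean structure of $\mathcal{A}$ from Lemma~\ref{IsBoolean} and hinge on the observation that $c\mapsto c(h_{u})$ takes at most two values on $\mathcal{A}$; the paper phrases this by exhibiting the unique maximal element $M(u;u;\mathcal{A})$ with $h_{u}\to u$ and (when it exists) the unique rank-$1$ element $m(u;w;\mathcal{A})$ with $h_{u}\to w\neq u$, while you phrase it by locating the unique circle $C_{i_{0}}$ of the top element containing $u$ and splitting on whether $C_{i_{0}}\in S(c)$. These are the same objects described in complementary language, and your identification of the two blocks with $\downarrow(\{C_{j}:j\neq i_{0}\})$ and $\uparrow\{C_{i_{0}}\}$ is exactly the paper's $\downarrow M(u;u;\mathcal{A})$ and $\uparrow m(u;w;\mathcal{A})$.
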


\begin{proof}

Let $\mathcal{A}\in \mathfrak{C}(G)/\sim _{a}$ and observe that every least
element of $\mathcal{A}$ is an adjacency free contributor, so the set of
possible maximal elements such that $c(h_{u})=u$ is non-empty. Using the
definition of activation, the facts that $\mathcal{A}$ is boolean, and that
there is at least one element (the $\mathbf{0}$-element) in each activation
class with $h_{u}\rightarrow u$, there is exactly one maximal element with $%
h_{u}\rightarrow u$, let $M(u;u;\mathcal{A})$ be this maximal element. Thus,
the principal ideal $\downarrow M(u;u;\mathcal{A})$ exists and is
necessarily boolean. Moreover, $\downarrow M(u;u;\mathcal{A})=\mathcal{A}$
if, and only if, $c(h_{u})=u$ for all contributors $c\in \mathcal{A}$, thus $%
\mathcal{A}/\sim _{uw}$ is empty for all $w\neq u$.

Since $\mathcal{A}$ is boolean, if there is a contributor $d$ such that $%
d(h_{u})=w\neq u$, then all contributors of $\mathcal{A}$ with $%
h_{u}\nrightarrow u$ must have $h_{u}\rightarrow w$, since every edge is a $%
2 $-edge. Moreover, if there is a contributor of $\mathcal{A}$ with $%
h_{u}\rightarrow w$, then there is a unique minimal element with $%
m(h_{u})=w\neq u$, let $m(u;w;\mathcal{A})$ be this minimal element (if it
exists). By construction, $m(u;w;\mathcal{A})$ is the contributor of $%
\mathcal{A}$ with only the circle containing the $uw$-adjacency active, is a
rank $1$ element in $\mathcal{A}$, and is the boolean complement of $M(u;u;%
\mathcal{A})$. Thus, $\mathcal{A}=\downarrow M(u;u;\mathcal{A})\cup \uparrow
m(u;w;\mathcal{A})$.
\qed \end{proof}

The $(u;w)$\emph{-cut of activation class} $\mathcal{A}$ is the subclass of $%
\mathcal{A}/\sim _{uw}$ where each element has $c(h_{u})=w$ --- that is, $%
\downarrow M(u;u;\mathcal{A})$ if $u=w$, or $\uparrow m(u;w;\mathcal{A})$ if 
$u\neq w$ and $m(u;w;\mathcal{A})$ exists. Let $U,W\subseteq V$ with $%
\left\vert U\right\vert =\left\vert W\right\vert =k$, and $\mathbf{u}%
=(u_{1},u_{2},\ldots ,u_{k})$, $\mathbf{w}=(w_{1},w_{2},\ldots ,w_{k})$ be
linear orderings of $U$ and $W$ according to their placement in the implied
linear ordering of $V$ in the underlying incidence matrix. The $(\mathbf{u};%
\mathbf{w})$\emph{-cut of the activation class }$\mathcal{A}$ is the
corresponding subclass in $\mathcal{A}/\mathop\bigcirc \limits_{i\in \lbrack
k]}\sim _{u_{i}w_{i}}$. Let $\mathcal{A}(\mathbf{u};\mathbf{w};G)$ denote
the $(\mathbf{u};\mathbf{w})$-cut of activation class $\mathcal{A}$, and $%
\widehat{\mathcal{A}}(\mathbf{u};\mathbf{w};G)$ be the elements of $\mathcal{%
A}(\mathbf{u};\mathbf{w};G)$ with the adjacency or backstep from $u_{i}$ to $%
w_{i}$ is removed for each $i$. Let $\mathfrak{C}(\mathbf{u};\mathbf{w};G)$
be the set of all elements in all $\mathcal{A}(\mathbf{u};\mathbf{w};G)$,
and $\widehat{\mathfrak{C}}(\mathbf{u};\mathbf{w};G)$ be the elements of $%
\mathfrak{C}(\mathbf{u};\mathbf{w};G)$ with the adjacency or backstep from $%
u_{i}$ to $w_{i}$ is removed for each $i$.

\begin{example}
\label{Ex3}Figure \ref{C classes circled} shows $(v_{1},v_{1})$-cuts of the
contribution classes from Figure \ref{Cclasses}. 
\begin{figure}[H]
\centering
\includegraphics[scale=1]{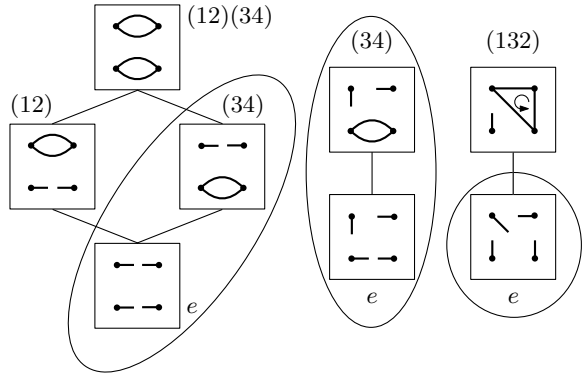}
\caption{$(v_{1},v_{1})$-cuts of contribution classes.}
\label{C classes circled}
\end{figure}

Observe that the first two sub-classes are non-trivial boolean lattices, the
final sub-class is a trivial boolean lattices, and the second sub-class
has an empty upper order ideal.
\end{example}

\section{Universal Contributors and The Matrix-tree Theorem}

\label{MTT2}

\subsection{Adjacency Completion}

Given an oriented hypergraph $G=(V,E,I,\iota ,\sigma )$ let $G^{\prime
}=(V,E\cup E_{0},I\cup I_{0},\iota ,\sigma ^{\prime })$ be the oriented
hypergraph obtained by adding a bidirected edge to $G$ for every
non-adjacent pair of vertices, where $\sigma ^{\prime }=\sigma $ for all $%
i\in I$, and $\sigma ^{\prime }=0$ for all $i\in I_{0}$ (see \cite{Grill1}
for relationship to the injective envelope). The \emph{sign of a
(sub-)contributor} is the product of the weak walks. The inclusion of $0$%
-signed-incidences in $G^{\prime }$ implies that an element of $\widehat{%
\mathfrak{C}}(\mathbf{u};\mathbf{w};G)$ has non-zero sign if, and only if,
it exists in $G$. Let $\widehat{\mathfrak{C}}_{\neq 0}(\mathbf{u};\mathbf{w}%
;G^{\prime })$ be the set of non-zero elements of $\widehat{\mathfrak{C}}(%
\mathbf{u};\mathbf{w};G^{\prime })$. This fact gives the following simple
Lemma that relates the global contributors of $G^{\prime }$ to the
Chaiken-type forests of \cite{Seth1} separated by multiplicity:

\begin{lemma}
\label{ChaikenFigs}
If $U,W\subseteq V$ with with linear orderings $\mathbf{u}$ and $\mathbf{w}$%
, then 
\begin{eqnarray*}
\widehat{\mathfrak{C}}_{\neq 0}(\mathbf{u};\mathbf{w};G^{\prime })=%
\mathfrak{C}(U;W;G).
\end{eqnarray*}
\end{lemma}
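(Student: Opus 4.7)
The plan is to establish the asserted equality by two set inclusions, each flowing directly from unpacking the definitions of the adjacency completion $G'$, the $(\mathbf{u};\mathbf{w})$-cut, and the hat reduction.

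For the inclusion $\widehat{\mathfrak{C}}_{\neq 0}(\mathbf{u};\mathbf{w};G') \subseteq \mathfrak{C}(U;W;G)$, I would start with an element $\widehat{c}$ obtained from some contributor $c$ of $G'$ lying in the $(\mathbf{u};\mathbf{w})$-cut by deleting the $u_i \to w_i$ adjacency or backstep at $u_i$ for each index $i$. The remaining domain is the coproduct $\coprod_{u \in \overline{U}} \overrightarrow{P}_1$ with each tail-vertex $u$ mapped to $u$. The global covering property $\{c(h_v) : v \in V\} = V$ combined with the cut condition $c(h_{u_i}) = w_i$ forces $\{c(h_u) : u \in \overline{U}\} = \overline{W}$ exactly. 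Non-vanishing of the sign, which is the product of the weak walk signs over the retained $\overline{U}$-indexed paths, prohibits any retained weak walk from using an incidence in $I_0$; hence the image of $\widehat{c}$ lies in $G$, and $\widehat{c}$ satisfies the definition of a sub-contributor in $\mathfrak{C}(U;W;G)$.

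For the reverse inclusion, given $d \in \mathfrak{C}(U;W;G)$, I would extend $d$ to a contributor of $G'$ by attaching, for each $i$, an arbitrary $u_i \to w_i$ adjacency or $u_i$-backstep available in $G'$; the adjacency completion construction guarantees that such a choice always exists (possibly using incidences in $I_0$). Any such extension lies in the $(\mathbf{u};\mathbf{w})$-cut of some activation class and reduces to $d$ under the hat operation, and since $d$ itself uses only incidences in $I$ the reduced object has non-zero sign and hence belongs to $\widehat{\mathfrak{C}}_{\neq 0}(\mathbf{u};\mathbf{w};G')$. The contributor axioms for the extension hold because the head-image on the appended paths is exactly $W$, which together with the head-image $\overline{W}$ of $d$ produces $V$, and the tail-map remains the identity on $V$.

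The only real subtlety to watch is the sign bookkeeping: the sign of a full contributor of $G'$ typically vanishes as soon as any appended $u_i \to w_i$ piece uses a $0$-signed incidence, whereas the sign attached to a hat-reduced object ignores the deleted pieces and therefore captures precisely the condition that the surviving sub-contributor lives inside $G$. Making that distinction explicit is what allows the non-zero-sign filter on $\widehat{\mathfrak{C}}(\mathbf{u};\mathbf{w};G')$ to align exactly with the requirement that a sub-contributor map into $G$ rather than the enlarged hypergraph $G'$, after which both inclusions collapse to direct readings of the definitions.
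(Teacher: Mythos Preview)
Your argument is correct and matches the paper's approach: the paper does not give a separate proof but presents the lemma as an immediate consequence of the sentence preceding it, namely that an element of $\widehat{\mathfrak{C}}(\mathbf{u};\mathbf{w};G')$ has non-zero sign if and only if it lies in $G$. Your double-inclusion write-up is simply the explicit unpacking of that observation, and the only additional point you supply---that the adjacency completion always furnishes the needed $u_i\to w_i$ piece for the reverse inclusion---is exactly the routine check the paper leaves implicit.
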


\begin{example}
\label{Ex4}Figure \ref{AdjComplCont} shows an additional contribution class 
from Figure \ref{Cclasses} that exists in the adjacency completion.
\begin{figure}[H]
\centering
\includegraphics[scale=1]{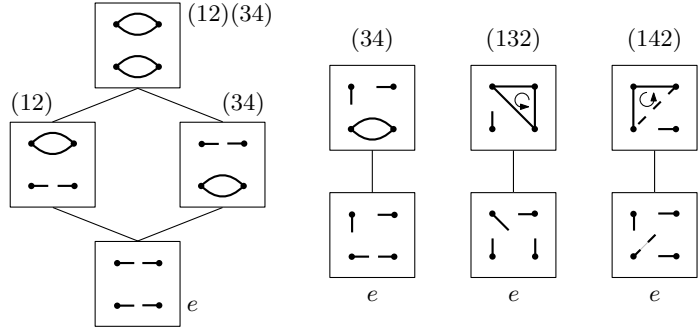}
\caption{Contributors in $G^{\prime}$.}
\label{AdjComplCont}
\end{figure}
Observe that the $(142)$ contributor with the $v_4v_2$-adjacency removed  is a member 
of $\widehat{\mathfrak{C}}_{\neq 0}(v_4;v_2;G^{\prime })$, exists in $G$, and counts in 
the $v_4v_2$-minor calculation.
\end{example}

Lemma \ref{ChaikenFigs} provides the following restatement of Theorem \ref{AMMT}:

\begin{theorem}
Let $G$ a bidirected graph with Laplacian matrix $\mathbf{L}_{G}$. Given $%
U,W\subseteq V$ with $\left\vert U\right\vert =\left\vert W\right\vert $ and
linear orderings $\mathbf{u}$ and $\mathbf{w}$, let $[\mathbf{L}_{G}]_{(%
\mathbf{u};\mathbf{w)}}$ be the minor of $\mathbf{L}_{G}$ formed by the
ordered deletion of the rows corresponding to the vertices in $U$ and the
columns corresponding to the vertices in $W$. Then we have,

\begin{enumerate}
\item $perm([\mathbf{L}_{G}]_{(\mathbf{u};\mathbf{w)}})=\dsum\limits_{c\in 
\widehat{\mathfrak{C}}_{\neq 0}(\mathbf{u};\mathbf{w};G^{\prime
})}(-1)^{on(c)+nn(c)}$,

\item $\det ([\mathbf{L}_{G}]_{(\mathbf{u};\mathbf{w)}})=\dsum\limits_{c\in 
\widehat{\mathfrak{C}}_{\neq 0}(\mathbf{u};\mathbf{w};G^{\prime
})}\varepsilon (c)\cdot (-1)^{on(c)+nn(c)}$.
\end{enumerate}

Where $\varepsilon (c)$ is the number of inversions in the natural bijection
from $\overline{U}$ to $\overline{W}$.
\end{theorem}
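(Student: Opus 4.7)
The plan is to derive the theorem as a direct restatement of parts (1) and (2) of Theorem~\ref{AMMT}, with the sole new ingredient being Lemma~\ref{ChaikenFigs}. Since a bidirected graph is an oriented hypergraph, Theorem~\ref{AMMT} applies verbatim and expresses $\mathrm{perm}([\mathbf{L}_{G}]_{(U;W)})$ and $\det([\mathbf{L}_{G}]_{(U;W)})$ as signed sums indexed by $\mathfrak{C}(U;W;G)$, with weights $(-1)^{on(c)+nn(c)}$ and $\varepsilon(c)\cdot (-1)^{on(c)+nn(c)}$ respectively; the ordered deletions dictated by $\mathbf{u}$ and $\mathbf{w}$ are exactly what fixes the inversion count $\varepsilon(c)$ in the minor.

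The next step is to invoke Lemma~\ref{ChaikenFigs}, which provides the set equality
\begin{equation*}
\widehat{\mathfrak{C}}_{\neq 0}(\mathbf{u};\mathbf{w};G^{\prime}) \;=\; \mathfrak{C}(U;W;G).
\end{equation*}
Under this identification I would re-index the sums and then verify that the three invariants $on(c)$, $nn(c)$, and $\varepsilon(c)$ transport faithfully. The inversion count $\varepsilon(c)$ depends only on the induced bijection $\overline{U}\to\overline{W}$ read off from the head-assignments $c(h_{v})$ with $v\in\overline{U}$, so it is common to both sides. The parity counts $on(c)$ and $nn(c)$ are defined on the non-adjacency-trivial components (paths or circles) in the image of $c$ viewed as a map on $\bigsqcup_{u\in\overline{U}}\overrightarrow{P}_{1}$, and this image is exactly what the hat operation produces from the full contributor of $G^{\prime}$.

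The main obstacle, and really the only content beyond citation, is the component-by-component verification that the coordinated excision of the $k$ paired arcs $u_{i}\to w_{i}$ from a full contributor of $G^{\prime}$ yields precisely the circle and path components of the corresponding sub-contributor in $G$. Because every vertex of a full contributor is the tail of exactly one arc and the head of exactly one arc, the image is a disjoint union of directed circles together with backsteps; removing the $u_{i}w_{i}$-arcs cleaves each such circle that meets $U$ into paths ending and beginning outside $U$, exactly matching the sub-contributor's components. The non-zero-sign restriction in $\widehat{\mathfrak{C}}_{\neq 0}$ simultaneously forbids any surviving arc from the zero-weighted completion $G^{\prime}\smallsetminus G$, which is the same condition as requiring the sub-contributor to live in $G$. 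Once these checks are assembled, substituting $\widehat{\mathfrak{C}}_{\neq 0}(\mathbf{u};\mathbf{w};G^{\prime})$ for $\mathfrak{C}(U;W;G)$ in the two formulas of Theorem~\ref{AMMT} produces the stated identities without further calculation.
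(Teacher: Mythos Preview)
Your proposal is correct and follows exactly the paper's approach: the paper presents this theorem without proof, introducing it simply as ``Lemma~\ref{ChaikenFigs} provides the following restatement of Theorem~\ref{AMMT},'' which is precisely the substitution you carry out. Your additional verification that $on(c)$, $nn(c)$, and $\varepsilon(c)$ transport under the identification is more detail than the paper supplies, but it is the right thing to check and is consistent with the intended argument.
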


\subsection{Applications}

We now examine alternate proofs of established results using the boolean
order of contributor maps. Besides providing more insight into contributors,
the hope is these techniques can be generalized to a complete theory for
oriented hypergraphs --- as evidenced by Theorem \ref{AMMT}.

Let $\mathfrak{M}^{-}$ be the set of maximal elements from the positive-circle-free 
activation classes. 

\begin{lemma}
If $G$ is a signed graph, then $\det (\mathbf{L}_{G})=\dsum\limits_{c\in 
\mathfrak{M}^{-}}2^{nc(c)}$.
\end{lemma}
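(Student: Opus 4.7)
The plan is to combine the determinant formula from Theorem \ref{PDLA}(2) with the boolean structure of activation classes from Lemma \ref{IsBoolean}. Start by writing
\begin{equation*}
\det(\mathbf{L}_G) = \sum_{c \in \mathfrak{C}_{\geq 0}(G)} (-1)^{pc(c)} = \sum_{\mathcal{A} \in \mathfrak{C}(G)/\sim_a} \;\sum_{c \in \mathcal{A}} (-1)^{pc(c)},
\end{equation*}
since the activation equivalence classes partition $\mathfrak{C}_{\geq 0}(G)$ (every contributor is trivially activation-equivalent to itself, and signed graphs have all edges of size $2$ so packing/unpacking is well-defined by Lemmas \ref{PUInverses}--\ref{UCommutes}).

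The core step is to evaluate the inner sum on a single activation class $\mathcal{A}$ with maximal element $M$. By Lemma \ref{IsBoolean}, the elements of $\mathcal{A}$ are in bijection with subsets $S$ of the set $C(M)$ of circles of $M$, where $S$ records which circles are activated. Crucially, unpacking to activate a circle does not change the underlying edge set traversed by that circle, so the sign of each activatable circle is an invariant of the class; write $C(M) = P \sqcup N$ for the partition into positive and negative circles, and observe $pc(c_S) = |S \cap P|$ for the contributor $c_S$ corresponding to $S$. Then
\begin{equation*}
\sum_{c \in \mathcal{A}} (-1)^{pc(c)} = \sum_{S \subseteq C(M)} (-1)^{|S \cap P|} = \Bigl(\sum_{S_+ \subseteq P} (-1)^{|S_+|}\Bigr)\Bigl(\sum_{S_- \subseteq N} 1\Bigr) = (1-1)^{|P|}\cdot 2^{|N|}.
\end{equation*}
This vanishes whenever $|P| = pc(M) > 0$, and equals $2^{nc(M)}$ when $pc(M) = 0$, i.e.\ precisely when $\mathcal{A}$ is positive-circle-free. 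Summing over the surviving classes gives $\det(\mathbf{L}_G) = \sum_{c \in \mathfrak{M}^-} 2^{nc(c)}$.

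The only genuine obstacle is the invariance of circle signs under activation: one must verify that when an unpacking step converts a backstep at $v$ into an adjacency which may then participate in an existing circle (or when a sequence of unpackings creates a new circle), the sign of that circle is determined by the incidences it traverses in $G$, independent of which representative of $\mathcal{A}$ one reads the circle from. This follows because both packing and unpacking only reroute head-incidences along existing bidirected edges of $G$, so the multiset of adjacency signs traversed by any given circle is fixed across the activation class. Once this observation is in place the factorization above is immediate and the lemma follows.
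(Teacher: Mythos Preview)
Your proof is correct and follows the same overall strategy as the paper: partition $\mathfrak{C}_{\geq 0}(G)$ into activation classes, use the boolean structure of each class from Lemma \ref{IsBoolean}, and show that only the positive-circle-free classes survive. The one difference is cosmetic: where the paper handles the inner sum by a three-case analysis (all circles positive; no positive circles; at least one positive circle, treated via an inductive chain of sublattices $B_k$), you collapse all three cases into the single product identity $\sum_{S\subseteq P\sqcup N}(-1)^{|S\cap P|}=(1-1)^{|P|}\,2^{|N|}$, which is a cleaner packaging of the same computation.
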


\begin{proof}

From Theorem \ref{PDLA} and Lemma \ref{IsBoolean}%
\begin{eqnarray*}
\det (\mathbf{L}_{G}) &=&\dsum\limits_{c\in \mathfrak{C}_{\geq
0}(G)}(-1)^{pc(c)} \\
&=&\dsum\limits_{\mathcal{A}\in \mathfrak{C}(G)/\sim _{a}}\dsum\limits_{c\in 
\mathcal{A}}(-1)^{pc(c)}\text{.}
\end{eqnarray*}%
Let $M_{\mathcal{A}}^{-}$ be the minimal element of activation class $%
\mathcal{A}$ that has the maximal number of negative circles (if it exists). 

\textit{Case 1 ($M_{\mathcal{A}}^{-}$ does not exist):} If $M_{\mathcal{A}}^{-}$ does 
not exist, then $\mathcal{A}$ is a boolean lattice with every circle positive. Since each 
contributor is signed $(-1)^{pc(c)}$, the signs of the contributors of $\mathcal{A}$ alternate with rank. Thus, 
the sum of elements is the alternating sum of binomial coefficients which equals $0$. 

\textit{Case 2 ($M_{\mathcal{A}}^{-}$ exists):} If $M_{\mathcal{A}}^{-}$ exists, it is 
necessarily unique, $\downarrow M_{\mathcal{A}}^{-}$ is boolean, and every element 
of $\downarrow M_{\mathcal{A}}^{-}$ is signed $+1$. 

\textit{Case 2a (no positive circles):} 
If there are no positive circles in $\mathcal{A}$, then $\downarrow M_{\mathcal{A}}^{-} = \mathcal{A}$ and 
the sum of the elements is $2^{nc(M_{\mathcal{A}})}$.

\textit{Case 2b (at least one positive circle):} Suppose $1_\mathcal{A}$ has exactly $n$ positive circles, 
let $p_1, p_2, \ldots, p_n$ be contributors of $\mathcal{A}$ with exactly one positive circle, 
and let $P_i=[0_{\mathcal{A}},p_i]$. For $k \in [0,n]$, consider the collection 
of boolean lattices $$B_k = (\downarrow M_{\mathcal{A}}) \vee \bigvee\limits_{i=1}^{k}{P_i},$$ 
where $B_0 = \downarrow M_{\mathcal{A}}$. By construction, $B_i \cong B_i \vee p_{i+1}$, 
$B_{i+1} = B_{i} \vee P_{i+1}$, $B_{i+1}$ is a boolean lattice 1 rank larger than $B_{i}$, and 
$B_n = \mathcal{A}$. Additionally, for each $b \in B_{i}$ and $b \vee p_{i+1} \in B_i \vee p_{i+1}$, 
$$(-1)^{pc(b \vee p_{i+1})} = (-1)^{pc(b)+1}.$$ Thus, the sum of all contributors is necessarily $0$ as long 
as there exists a positive circle.

The only non-cancellative activation classes are those that are positive-circle-free, and the result follows 
with the observation that an empty sum is $0$. \qed \end{proof}

\begin{corollary}
If $G$ is a balanced signed graph, then $\det (\mathbf{L}_{G})=0$.
\end{corollary}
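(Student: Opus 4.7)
The plan is to derive this as a direct consequence of the preceding Lemma together with the structural fact that maximal contributors in any activation class contain at least one circle. By the Lemma, $\det(\mathbf{L}_G) = \sum_{c \in \mathfrak{M}^-} 2^{nc(c)}$, so it suffices to show that $\mathfrak{M}^-$ is empty whenever $G$ is a balanced signed graph, and then use the convention that an empty sum equals $0$.

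First I would recall the defining property of balance: a signed graph $G$ is balanced if and only if every circle in $G$ is positive. Combined with the earlier lemma asserting that each maximal contributor in an activation class contains at least one circle, this means that every maximal contributor of $G$ contains at least one positive circle.

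Next I would observe that an activation class $\mathcal{A}$ is ``positive-circle-free'' (in the sense used in the proof of the preceding Lemma) precisely when no contributor in $\mathcal{A}$ contains a positive circle; in particular, the maximal element of such an $\mathcal{A}$ would need to avoid positive circles. By the previous paragraph this is impossible when $G$ is balanced, so no activation class is positive-circle-free, and hence $\mathfrak{M}^- = \emptyset$.

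The conclusion then follows immediately from the formula $\det(\mathbf{L}_G) = \sum_{c \in \mathfrak{M}^-} 2^{nc(c)} = 0$. There is essentially no obstacle here: the nontrivial cancellation work was already absorbed into the preceding Lemma (Case 2b), and balance simply ensures that every activation class falls into the cancellative Case 2b rather than the surviving Case 2a, so the entire sum collapses.
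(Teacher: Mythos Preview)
Your argument is correct and matches the paper's intended reasoning: the paper states the corollary without proof, relying on exactly the deduction you spell out --- balance forces every circle to be positive, the earlier lemma guarantees each activation class has a maximal contributor with at least one circle, hence no activation class is positive-circle-free, $\mathfrak{M}^{-}=\emptyset$, and the formula from the preceding Lemma gives an empty sum.

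One small slip in your closing parenthetical: in a balanced signed graph there are \emph{no} negative circles, so in the Lemma's case analysis each activation class actually lands in Case~1 (where $M_{\mathcal{A}}^{-}$ does not exist and every circle is positive), not Case~2b. Both cases yield a zero contribution, so this does not affect your proof; just be careful which cancellative case you cite.
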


Let $\widehat{\mathcal{A}}_{\neq 0}(u;w;G^{\prime })$ be the set of non-zero
elements of $\widehat{\mathcal{A}}(u;w;G^{\prime })$.

\begin{lemma}
If $G$ is a bidirected graph, then the set of elements in all single-element $\widehat{%
\mathcal{A}}_{\neq 0}(u;w;G^{\prime })$ is activation equivalent to the set
of spanning trees of $G$.
\end{lemma}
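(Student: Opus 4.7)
The plan is to characterize single-element cuts via the cut-structure theorem and then biject them with spanning trees of $G$. First, I would invoke the partitioning theorem of Section \ref{Cuts}: each $\widehat{\mathcal{A}}(u;w;G^{\prime })$ is a principal ideal inside the boolean activation class $\mathcal{A}$ --- namely $\downarrow M(u;u;\mathcal{A})$ when $u=w$, and $\uparrow m(u;w;\mathcal{A})$ when $u\neq w$. Such an ideal is single-element exactly when its generator is simultaneously the top and bottom of the ideal, i.e.\ $M(u;u;\mathcal{A})=0_{\mathcal{A}}$ or $m(u;w;\mathcal{A})=1_{\mathcal{A}}$. Since the activatable circles in a class are vertex-disjoint (by the underlying permutation structure of contributors), either collapse forces $\mathcal{A}$ to contain exactly one activatable circle, and this circle must pass through $u$.

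Next, I would extract the spanning-tree structure from such a class. The unique contributor $c$ in the cut maps every vertex outside the one activatable circle either to a backstep or to an adjacency pointing toward the circle, and traces out the circle on the vertices it covers. After the hat operation deletes the $uw$-arc, the underlying unoriented edge set of $c$ consists of $|V|-1$ edges of $G$: the circle has been broken into a path, and the remaining off-cycle structure is an in-arborescence toward that path. I would then verify connectivity and acyclicity, so that the edge set is a spanning tree of $G$.

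For the converse, given a spanning tree $T$ of $G$ together with any non-tree edge $uw\in E(G)$, the fundamental circle $C_{uw}$ of $uw$ with respect to $T$ generates a unique activation class whose only activatable circle is $C_{uw}$, and its $(u;w)$-cut is single-element; the hat operation on this cut recovers $T$. The activation-equivalence part of the statement is used to mod out the choice of directed orientation of $C_{uw}$ and the choice of non-tree edge used to represent $T$. The main obstacle I anticipate is controlling the vertices that lie outside the unique circle: the $\widehat{\mathcal{A}}_{\neq 0}$ condition must be leveraged to rule out orientations of the off-cycle branches that either use phantom $G^{\prime }$-edges or violate the image-covers-$V$ requirement of a contributor, since this is what forces the in-arborescence orientation and hence the genuine spanning tree structure rather than a disconnected figure.
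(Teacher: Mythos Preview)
Your forward direction starts well --- the characterization of single-element cuts as forcing exactly one activatable circle through $u$ is correct --- but the description of the single element itself is wrong. In the $(u;u)$-cut the element is $0_{\mathcal A}$ with the $u$-backstep deleted, so it is \emph{all backsteps}; in the $(u;w)$-cut with $u\neq w$ it is $1_{\mathcal A}$ with the $uw$-adjacency deleted, so it is a directed $wu$-path together with backsteps at the remaining $|V|-k$ vertices. There are no ``adjacencies pointing toward the circle'' in the contributor; that arborescence picture only appears \emph{after} you unpack, and unpacking is precisely the step that produces the spanning tree. The paper's argument is: unpack everything, count $|V|-1$ edges on $|V|$ vertices, and observe that an additional circle would contradict single-element-ness. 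Your sketch never invokes unpacking and instead tries to read the tree off $c$ directly, which does not work.

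The converse has a more serious gap. The vertices $u$ and $w$ are \emph{fixed} in the statement; you are not free to choose ``any non-tree edge $uw\in E(G)$.'' Your construction therefore fails outright when $u=w$ (there is no edge at all) and also when $u\neq w$ but $uw\notin E(G)$ (the fundamental circle must then be formed by adding a phantom edge of $G'$, and it is only the hat-deletion of that phantom edge that lands you back in $G$). The paper handles these separately: for $u=w$ it packs a spanning arborescence rooted at $u$; for $u\neq w$ it splits on whether $T$ already contains a $uw$-adjacency (add a parallel edge to form a degenerate $2$-circle) or not (add the $uw$-adjacency in $G'$ and use the fundamental circle). Your remark that activation equivalence ``mods out the choice of non-tree edge'' is a symptom of this misreading --- there is no such choice to mod out.
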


\begin{proof}

\textit{Case 1a (}$u=w$\textit{):} Suppose $u=w$, and let $\widehat{\mathcal{%
A}}_{\neq 0}(u;u;G^{\prime })$ be a single-element activation classes of $%
G^{\prime }$. The element of $\widehat{\mathcal{A}}_{\neq 0}(u;u;G^{\prime
}) $ consists of exactly $\left\vert V\right\vert -1$ backsteps, all of
which exist in $G$, but none of which contain $u$. Unpacking all backsteps
results in a circle-free subgraph on $\left\vert V\right\vert $ vertices
with $\left\vert V\right\vert -1$ edges, i.e. a spanning tree --- if it was
not circle-free then $\widehat{\mathcal{A}}_{\neq 0}(u;u;G^{\prime })$ would
have more than a single element. Thus, the cardinality of the set of
single-element $\widehat{\mathcal{A}}_{\neq 0}(u;u;G^{\prime })$ is less
than, or equal to, the number spanning trees of $G$.

\textit{Case 1b (}$u=w$\textit{):} Now consider all spanning outward
arborescences of $G$ rooted at $u$. For each arborescence, pack all
adjacencies along the opposite orientation of the arborescence to produce a
unique, non-zero, element of an $\widehat{\mathcal{A}}(u;u;G^{\prime })$.
Thus, the cardinality of the set of single-element $\widehat{\mathcal{A}}%
_{\neq 0}(u;u;G^{\prime })$ is greater than, or equal to, the number
spanning trees of $G$.

\textit{Case 2a (}$u\neq w$)\textit{:} Suppose $u\neq w$, and let $\widehat{%
\mathcal{A}}_{\neq 0}(u;w;G^{\prime })$ be a single-element activation class
of $G^{\prime }$. Since $\widehat{\mathcal{A}}_{\neq 0}(u;w;G^{\prime })$ is
obtained by the upper order ideal of $\mathcal{A}(u;w;G^{\prime })$
generated by the maximal contributor $M(u;w;\mathcal{A})$, and $\widehat{%
\mathcal{A}}_{\neq 0}(u;w;G^{\prime })$ consists of a single element, $M(u;w;%
\mathcal{A})$ must be unicyclic. Thus, the corresponding contributor in $%
\widehat{\mathcal{A}}_{\neq 0}(u;w;G^{\prime })$ must contain a $wu$-path on 
$k$ vertices, and backsteps at the $\left\vert V\right\vert -k$ vertices
outside the path. Unpacking all backsteps is a circle-free subgraph on $%
\left\vert V\right\vert $ vertices and $\left\vert V\right\vert -1$ edges
--- if not then $\widehat{\mathcal{A}}_{\neq 0}(u;w;G^{\prime })$ would have
more than a single element. Thus, the cardinality of the set of
single-element $\widehat{\mathcal{A}}_{\neq 0}(u;w;G^{\prime })$ is less
than, or equal to, the number spanning trees of $G$.

\textit{Case 2b (}$u\neq w$)\textit{: }Now consider the set of spanning
trees of $G$. To see that the cardinality of the set of single-element $%
\widehat{\mathcal{A}}_{\neq 0}(u;w;G^{\prime })$ is greater than, or equal
to, the number spanning trees of $G$, examine the following sub-cases:

\textit{Case 2b, part 1 (}$u\neq w$)\textit{: }If a spanning tree $T$
contains an adjacency between $u$ and $w$ construct the
unicyclic-contributor of $G$ as follows: (1) add another parallel adjacency
between $u$ and $w$, (2) orient the parallel edges to form a degenerate $2$%
-circle, and (3) pack all remaining adjacencies away from $u$ and $w$. The
member of $\widehat{\mathcal{A}}_{\neq 0}(u;w;G^{\prime })$ is obtained by
deleting the $uw$-directed-adjacency.

\textit{Case 2b, part 2 (}$u\neq w$)\textit{:} If a spanning tree $T$ does
not contain an adjacency between $u$ and $w$ construct the
unicyclic-contributor of $G$ as follows: (1) add a $uw$-directed-adjacency,
(2) oriented the resulting unique fundamental circle coherently with the $uw$%
-directed-adjacency, and (3) pack all remaining adjacencies away from the
fundamental circle. The member of $\widehat{\mathcal{A}}_{\neq
0}(u;w;G^{\prime })$ is obtained by deleting the $uw$-directed-adjacency.
\qed \end{proof}

Using the techniques of the previous two Lemmas, adjusting for the cofactor,
and using the fact that every adjacency (hence, circle) is positive in a graph, it is easy to reclaim:

\begin{corollary}
If $G$ is a graph, then the $uw$-cofactor of $\mathbf{L}_{G}$ is $T(G)$, the
number of spanning trees of $G$.
\end{corollary}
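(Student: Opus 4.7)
The plan is to apply Theorem \ref{AMMTa} with $U = \{u\}$ and $W = \{w\}$, yielding
$$\det([\mathbf{L}_G]_{(u;w)}) = \sum_{c \in \widehat{\mathfrak{C}}_{\neq 0}(u;w;G')} \varepsilon'(c)\,(-1)^{pc(c)}\,(-1)^{op(c)+np(c)},$$
and to reduce the calculation to the two preceding lemmas of this subsection. Because $G$ is an ordinary graph, every incidence pair along a walk has sign product $+1$, so $np(c) = 0$ and every circle is positive, giving $pc(c) = tc(c)$. I then partition the sum by trimmed activation classes $\widehat{\mathcal{A}}_{\neq 0}(u;w;G')$, each of which is a boolean lattice.

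Next I would argue that each trimmed class contributes $0$ unless it is a single element. Since activation modifies only circle components and leaves the path portion of a contributor untouched, $\varepsilon'(c)$, $op(c)$ and $np(c)$ are constant on a trimmed class while $(-1)^{pc(c)}$ alternates with the boolean rank. Pairing contributors that differ by the activation of any fixed circle --- in direct analogy with Case 2b of the preceding signed-Laplacian determinant lemma --- forces the class-sum to vanish whenever any circle is activatable. By the preceding spanning-tree lemma, the surviving single-element classes are in bijection with the spanning trees of $G$.

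Finally I would verify the sign of each surviving contributor matches the cofactor sign $(-1)^{u+w}$. For $u = w$ the survivor is a backstep at every vertex of $\overline{U}$, giving $pc = op = 0$ and $\varepsilon'(c) = 1$, matching $(-1)^{u+u} = 1$ and contributing $+1$ per spanning tree. For $u \neq w$ the survivor consists of the unique $wu$-path in the spanning tree together with backsteps at the remaining vertices, so $pc = 0$ while $op(c)$ and $\varepsilon'(c)$ depend on the length of this path and on its vertex sequence. The heart of the argument is checking that $\varepsilon'(c)(-1)^{op(c)} = (-1)^{u+w}$, which the cofactor sign then cancels to yield $+1$ per spanning tree. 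The principal obstacle is precisely this sign bookkeeping when $u \neq w$: one must match the inversion parity of the natural bijection $\overline{U}\to\overline{W}$ and the path-length parity against the sign of the transposition $(u\,w)$, using the cyclic permutation on the vertex set of the unique tree path from $w$ to $u$.
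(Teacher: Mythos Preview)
Your proposal is correct and follows essentially the same approach as the paper, which itself gives only a one-sentence sketch (``using the techniques of the previous two Lemmas, adjusting for the cofactor, and using the fact that every adjacency (hence, circle) is positive''). You have simply unpacked that sketch: partitioning $\widehat{\mathfrak{C}}_{\neq 0}(u;w;G')$ into trimmed activation classes, killing the non-singleton classes via the boolean alternating-sign argument of the first lemma, identifying the survivors with spanning trees via the second lemma, and then doing the cofactor sign check --- the last of which the paper leaves entirely to the reader, so your explicit treatment of the $u\neq w$ sign bookkeeping goes beyond what the paper provides.
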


\section*{Acknowledgements}

This research is partially supported by Texas State University Mathworks.
The authors sincerely thank the referee for careful reading the manuscript
and for their valuable feedback.



\bibliographystyle{amsplain2}
\bibliography{mybib}

\end{document}